\documentclass[9pt]{article} 


\usepackage{amsmath, amsthm, amssymb,geometry} 
\usepackage[applemac]{inputenc}
\usepackage{relsize}
\usepackage[all]{xy} 
\usepackage{textcomp} 
\usepackage{color}
\usepackage{mathrsfs}
\usepackage{leftidx}
\usepackage{bm}





\DeclareMathOperator{\tr}{Tr}

\DeclareMathOperator{\Rm}{Rm}


\newcommand{\R}{\mathbb R}

\newcommand{\diff}{\mathrm{d}}
\newcommand{\dd}{\mathrm{d}}


\renewcommand{\mod}{\text{\rm mod}\,}

\renewcommand{\tilde}{\widetilde}


\theoremstyle{plain}
\newtheorem{theorem}{Theorem}
\newtheorem{proposition}[theorem]{Proposition}
\newtheorem{lemma}[theorem]{Lemma}

\theoremstyle{definition}
\newtheorem{definition}[theorem]{Definition}
\newtheorem{remark}[theorem]{Remark}

\theoremstyle{plain}
\newtheorem*{theorem*}{Theorem}
\newtheorem*{proposition*}{Proposition}
\newtheorem*{lemma*}{Lemma}
\newtheorem*{corollary*}{Corollary}
\newtheorem*{conjecture*}{Conjecture}
\theoremstyle{definition}
\newtheorem*{definition*}{Definition}
\newtheorem*{remark*}{Remark}
\newtheorem*{remarks*}{Remarks}

\makeatletter
\def\blfootnote{\xdef\@thefnmark{}\@footnotetext}
\makeatother

\numberwithin{theorem}{section}
\numberwithin{equation}{section}
\title{Cohomogeneity-one $G_2$-Laplacian flow on 7-torus}
\date{}                                           
\author{Hongnian Huang, Yuanqi Wang, Chengjian Yao}



\begin{document}
	\maketitle
	\begin{abstract}We prove the hypersymplectic flow of simple type on standard torus $\mathbb{T}^4$ exists for all time and converges to 
		the standard flat structure modulo diffeomorphisms. This result in particular gives the first example of a cohomogeneity-one $G_2$-Laplacian flow on a compact $7$-manifold 
		which exists for all time and converges to a torsion-free $G_2$ structure modulo diffeomorphisms.\footnote{YW is supported by Simons Collaboration in Geometry, Analysis, and Physics.  CY is supported by FNRS grant MIS.F.4522.15 and the ERC consolidator grant 646649  ``SymplecticEinstein''. This material is based upon work supported by the National Science Foundation under Grant No. DMS-1440140 while HH and CY were in residence at the	Mathematical Sciences Research Institute in Berkeley, California, during the Spring 2016 semester.}
	\end{abstract}

	\section{Introduction}
	Let $\underline\omega=(\omega_1, \omega_2, \omega_3)$ be a triple of $2$-forms on a differentiable $4$-manifold $X^4$, it is called a
	\emph{definite triple} if there exists a nowhere vanishing $4$-form $\mu$ on $X$ such that the matrix $\left( \frac{\omega_i\wedge\omega_j}{2\mu}\right)$ is
	positive definite everywhere on $M$.  
	If moreover if $\underline\omega$ is closed, it is called a \emph{hypersymplectic structure}. 
	
	Donaldson raised an open question in \cite{D1}: does $X^4$ admit a hyperK\"ahler structure if there is a hypersymplectic structure on $X$?\; The speculated answer for compact $X$ is 
	``\emph{YES}'', and Donaldson described a general constraint PDE of elliptic type to attack it. In \cite{FY}, a geometric flow is introduced to deform each symplectic
	form in its cohomology class simultaneously and the stationary solution of the flow is a hyperK\"ahler triple.  
	A significant fact is that this flow is the ``gradient flow" of some ``volume functional'', which is bounded from above by topological data and whose critical point (if it exists) is exactly the hyperK\"ahler structure in the same cohomology class (see \cite{FY}).  Let us recall the definition of this flow. \\
	
	Each hypersymplectic structure $\underline\omega$ on $X$ canonically determines a conformal structure $\mathscr{C}_{\underline\omega}$ on it, where 
	\[
	\Lambda^+_{\mathscr{C}_{\underline\omega}}=\text{Span}\{\omega_1, \omega_2, \omega_3\}
	\]
	There is a corresponding Riemannian metric $g\in \mathscr{C}_{\underline\omega}$ which is determined by the following formula:
	
	\begin{equation}\label{metric-from-definite-triple}
		g(u, v)\dd\text{vol}_g = \frac{1}{6}\epsilon_{ijk}\iota_u \omega_i\wedge\iota_v\omega_j\wedge\omega_k, \;\; \forall u, v\in TX
	\end{equation}
	We use $\mu$ to denote the volume form $\dd\text{vol}_g$ of this corresponding metric $g$ for simplicity of notation. Write 
	\begin{equation}
		Q=\big(Q_{ij}\big)=\Big(\frac{\omega_i\wedge\omega_j}{2\mu}\Big) = \Big(\frac{1}{2}\langle\omega_i, \omega_j\rangle\Big)
	\end{equation}
	for the matrix of inner products of the $\omega_i$'s, then it follows easily that $\det Q=1$ (cf. \cite[Lemma 2.4]{FY}). \\
	
	If $Q$ is a constant matrix on some open set, then $g$ is hyperK\"ahler on this open set, and $\underline\omega$ is the corresponding triple of $2$-forms of this corresponding 
	hyperK\"ahler structure up to a constant linear transformation.  We define three endomorphisms of $\Lambda^1$ by the formula:
	$$E_i : \Lambda^1 \rightarrow \Lambda^1$$
	$$\alpha \mapsto -*(\alpha\wedge\omega_i)$$
	If $\omega$ is a self-dual $2$-form with $|\omega|^2=2$, then this endomorphism is the corresponding almost complex structure $J$. We can then define three \emph{torsion $1$-forms}
	\begin{equation}\label{torsion-one-form}
		\tau_i= - \sum_{j=1}^3 E_j(\dd QQ^{-1})_{ij}, \;\; i=1, 2, 3
	\end{equation}
	
	The \emph{hypersymplectic flow} is the following system of PDE:
	
	\begin{equation}\label{definite-triple-flow}
		\partial_t \omega_i= \dd \tau_i, \;\; i=1, 2, 3
	\end{equation}
	
	This flow is intimately linked with \emph{$G_2$-Laplacian flow }, which was introduced by Bryant \cite{B} and Hitchin \cite{H} to study the existence of Riemannian metrics with holonomy group contained in $G_2$ on $7$-manifold.  On $M^7=X^4\times \mathbb{T}^3$ whose angular coordinates on $\mathbb{T}^3$ are denoted by $t_1, t_2, t_3\in [0, 2\pi)$, the $3$-form 
	\begin{equation} 
		\phi=\dd t^1\wedge\dd t^2\wedge\dd t^3 - \dd t^1\wedge\omega_1 - \dd t^2\wedge \omega_2 - \dd t^3\wedge\omega_3
	\end{equation}
	defines a \emph{closed $G_2$ structure} if and only if $\underline\omega$ is a \emph{hypersymplectic structure}.  The $2$-form 
	\begin{equation}\label{intrinsic-torsion}
		\bm\tau =  \sum_{i=1}^3 \diff t_i \wedge \tau_i
	\end{equation}
	is the intrinsic torsion form of $\phi$. Since $\Delta_\phi\phi=\dd \bm\tau$, the solution to the \emph{hypersymplectic flow} on $X$ gives a solution to
	\begin{equation}\label{G2-Laplacian-flow}
		\partial_t \phi= \Delta_\phi\phi
	\end{equation}
	i.e. the \emph{$G_2$-Laplacian flow} on $M$.  There are several important results regarding the $G_2$-Laplacian flow:  the short time existence was proved by \cite{BX}, a Shi-type estimate and compactness result
	were proved by \cite{LW1}, the dynamical stability was proved in \cite{LW2}. 
	The problem of long time existence has seen a lot of advances: the case of left-invariant closed $G_2$ structures on nilpotent Lie groups was obtained by \cite{FFM}; the case of homogeneous $G_2$-Laplacian flows on solvable Lie groups with a codimension-one Abelian normal subgroup was obtained by \cite{L1}, and there are lots of studies of the corresponding homogeneous Laplacian solitons in \cite{FFM}, \cite{L2}, \cite{N}. In general, assuming $|\Delta_\phi\phi|_\phi$ is uniformly bounded, the long time existence of $G_2$-Laplacian flow was obtained by \cite{LW1}. For the hypersymplectic flow on a compact $4$-manifold, the long time existence assuming bounded torsion was obtained in \cite{FY}.  There is also an interesting reduction of a warped $G_2$-Laplacian flow on $Y^6\times S^1$ to a coupled flow of the $SU(3)$-structure and the warped function on $Y^6$ in \cite{FR}.  \\
	
	The main result of this short article is \emph{long time existence} (Theorem \ref{long-time-existence}) and {convergence}  (Theorem \ref{convergence}) result for \emph{hypersymplectic flow} for \emph{hypersymplectic structures of simple type} (defined in next section) on the $4$-torus $\mathbb{T}^4$. These structures have $\mathbb{T}^3$ symmetry. It is worthing noting that the local boundary value problem for general hypersymplectic structures and torsion free hypersymplectic structures with $S^1$ symmetry was recently investigated in Donaldson \cite{D2} by a generalized Gibbons-Hawking construction.  In contrast to the homogeneous situations in \cite{FFM} and \cite{L1}, \cite{L2}, \cite{N}, the structures considered in this paper are of cohomogeneity-one. To the best of our knowledge, this result provides the first source of cohomogeneity-one $G_2$-Laplacian flows on a compact $7$-manifold which exist globally and converge (modulo diffeomorphisms).  It will be interesting to study the $G_2$-Laplacian flow with more general symmetries, hoping to obtain long time existence and convergence.\\
	
	The outline of the article is as the following. In section \ref{hypersympletic-flow-of-simple-type}, we introduce \emph{hypersymplectic structures of simple type} on $\mathbb{T}^4$ and write out their evolution equation as a system of three scalar functions assuming all the structures during the flow remain of the simple type. However, since it is not a priori clear if the flow remains of simple type, we have to prove the short time existence of such flow instead of applying the general existence theorem in \cite{BX}. Unfortunately, the system is degenerated parabolic and there is no general theory about the existence of solutions. Instead of solving this, we transform it to an equivalent system \eqref{degenerate-system}. By integrating the third equation and substituting back in to the first two, we get a differential-integral parabolic system of two functions, for which we can adapt standard techniques in PDE to prove the short time existence of the solution in section \ref{section-short-time-existence}.  In section \ref{section-quasi-isometric}, by using the maximum principle we show the solution is bounded in $C^0$, which geometrically means that all the metrics along the flow are quasi-isometric to the standard flat one. In section \ref{section-scalar-curvature-increasing}, we derive an important evolution inequality for the scalar curvature, which gives us the decaying behavior of the scalar curvature. Then, by a \emph{blow up argument}, we show long time existence (the argument here is independent of the general one in \cite{FY} and is much simpler) in section \ref{section-long-time-existence}. In section \ref{section-convergence}, we use all the bounds obtained in the previous sections to show that the pulling back of the hypersymplectic flow by a family of diffeomorphisms (determined by the flow) converges to the standard hyperK\"ahler structure.

	\section{Hypersymplectic flow of simple type on $4$-torus}\label{hypersympletic-flow-of-simple-type}
	
	Let $x_0, x_1, x_2, x_3$ be the standard coordinates on the standard $\mathbb{T}^4= S^1\times \mathbb{T}^3$, let $\underline{\omega}^0=(\omega_1^0, \omega_2^0, \omega_3^0)$ where
	
	\begin{equation}
		\begin{split}
			\omega_1^0= \dd x_0\wedge\dd x_1+ \dd x_2\wedge\dd x_3\\
			\omega_2^0= \dd x_0\wedge\dd x_2+ \dd x_3\wedge\dd x_1\\
			\omega_3^0= \dd x_0\wedge\dd x_3+ \dd x_1\wedge\dd x_2
		\end{split}
	\end{equation}\\
	are the triple of symplectic forms inducing the standard hyperK\"ahler structure on it. The corresponding 
	complex structures are written as $I_1, I_2, I_3$. 
	
	Let $\mathbb{T}^3$ act on the factor $\mathbb{T}^3\subset \mathbb{T}^4$ canonically, then every $\mathbb{T}^3$-invariant 2-form on $\mathbb{T}^4$ can be written as 
	\[
	\Omega=\sum_{i=1}^3 A_i\dd x_0\wedge\dd x_i 
	+ B_1\dd x_2\wedge\dd x_3 + B_2 \dd x_3\wedge\dd x_1
	+ B_3 \dd x_1\wedge \dd x_2
	\]
	for some smooth functions $A_1, A_2, A_3, B_1, B_2, B_3$ only depending on the $S^1$ factor of $\mathbb{T}^4$. If we assume $\dd\Omega=0$, then the 
	components $B_1,B_2$ and $B_3$ must be constant functions. 
	
	One simple case of these hypersymplectic structures arises when we assume 
	$$\omega_i =\omega^0_i - \dd I_i \dd \phi_i $$
	for three real valued functions $\phi_1, \phi_2, \phi_3$. If further we assume each $\phi_i$ only depends on the variable $x_0$,
	$\underline\omega$ is called a \emph{hypersymplectic structures of simple type}.  This type has a  nice formula:
	
	\begin{equation}\label{specialtriple}
		\begin{split}
			\omega_1= (1+ \phi_1'')\dd x_0\wedge\dd x_1+ \dd x_2\wedge\dd x_3\\
			\omega_2= (1+\phi_2'')\dd x_0\wedge\dd x_2+ \dd x_3\wedge\dd x_1\\
			\omega_3= (1+\phi_3'')\dd x_0\wedge\dd x_3+ \dd x_1\wedge\dd x_2
		\end{split}
	\end{equation}\\
	The condition of \emph{definiteness} for the hypersymplectic structure is that $1+\phi_i''>0, \; i=1, 2, 3.$
	Write for $i=1, 2, 3$, 
	\begin{equation} \label{EqnAVf}
		A_i= 1+ \phi_i'', \;\; V= (A_1A_2A_3)^\frac{1}{3}, \;\;f_i=\frac{A_i}{V} \end{equation}
	then the corresponding metric $g$ of this hypersymplectic structure (as defined in Equation \eqref{metric-from-definite-triple}) is given as:
	
	\begin{equation}
		g = V^2 \dd x_0^2 + f_1 \dd x_1^2 + f_2 \dd x_2^2 + f_3\dd x_3^2
	\end{equation} 
	and the volume form is 
	\begin{equation} 
		\mu = V \dd x_{0123}
	\end{equation}
	It is a \emph{multiply warped product}.  The matrix of inner-product $Q=(Q_{ij})$ is given as
	\[
	(Q_{ij}) 
	=\frac{1}{(A_1A_2A_3)^\frac{1}{3}}\left(\begin{array}{ccc}
	A_1 & 0 & 0\\
	0 & A_2 & 0\\
	0 & 0 & A_3
	\end{array}\right)
	=
	\left(\begin{array}{ccc}
	f_1 & 0 & 0\\
	0 & f_2 & 0\\
	0 & 0 & f_3
	\end{array}\right)
	\]
	
	The crucial identity 
	\begin{equation}\label{f_1 f_2 f_3=1}
		f_1 f_2 f_3 \equiv 1
	\end{equation}
	will be used frequently.

	By using the definition of the Hodge star operator $*$, i.e. $\alpha\wedge *\alpha=|\alpha|^2 \mu$ for any form $\alpha$, we derive that:
	\[
	*\dd x_0=V^{-1}\dd x_{123}\; ,\;
	*\dd x_1 =-f_1^{-1}V\dd x_{023}\; ,\;
	*\dd x_2 =- f_2^{-1} V\dd x_{031}\; ,\;
	*\dd x_3 = - f_3^{-1} V\dd x_{012}.
	\]
	
	We then get the three torsion $1$-forms from Equation (\ref{torsion-one-form}):
	\[
	\tau_i = (\log \frac{A_i}{V})' \frac{A_i}{V^2}\dd x_i
	\]
	for $ i=1, 2, 3$, where `` $'$ '' denotes the derivative with respect to $x_0$. 
	Let
	\[
	\omega_i(0) =\omega^0_i - \dd I_i \dd \phi_i(0)
	\]
	be a smooth \emph{hypersymplectic structure of simple type} on $\mathbb{T}^4$.  The \emph{ hypersymplectic flow}  (Equation (\ref{definite-triple-flow})) 
	with initial data $\underline\omega(0)=(\omega_1(0), \omega_2(0), \omega_3(0))$ , 
	
	\begin{equation}\label{definite-triple-flow-of-simple-type}
		\begin{split}
			\left\{\begin{array}{ll}
				\partial_t \omega_i = \dd \tau_i, & i=1, 2, 3\\
				\underline\omega|_{t=0}=\underline\omega(0).
			\end{array}\right.
		\end{split}
	\end{equation}
	is then reduced to a system of PDE:
	
	\begin{equation}\label{EqnA}
		\partial_t A_i =\Big((\log \frac{A_i}{V})'\frac{A_i}{V^2}\Big)', \; \; i=1, 2, 3
	\end{equation}
	for the three unknown functions $A_i: S^1\times\mathbb{R}^+\rightarrow \mathbb{R}^+, \;\; i=1, 2, 3$, whose initial data satisfies the normalization condition $\int_{S^1} A_i(\theta, 0)\dd \theta=2\pi$,  where $S^1=\mathbb{R}/2\pi \mathbb{Z}$.

	\subsection{Evolution equations}
	
	The PDEs \eqref{EqnA} for $A_i$'s can be expanded as
	
	\begin{equation}
		\begin{split}
			\partial_t A_i 
			& = \frac{VA_i'' - V'' A_i}{V^3} - \frac{3 A_i' V'}{V^3} + \frac{3 A_i V'^2}{V^4}\\
			& = \frac{1}{3(A_1A_2A_3)^\frac{2}{3}}\Big( 2 A_i'' - \sum_{j\neq i} \frac{A_i}{A_j} A_j''\Big) \\
			&\qquad +  \frac{1}{3(A_1A_2A_3)^\frac{2}{3}}\Big( A_i \sum_{j=1}^3 \frac{A_j'^2}{A_j^2} 
			+ \big(\sum_{j=1}^3 \frac{A_j'}{A_j}\big)^2 - 2 A_i' \sum_{j=1}^3 \frac{A_j'}{A_j}\Big)
		\end{split}
	\end{equation}
	for $ i=1, 2, 3$. 
	The principal symbol of the differential operator of the above PDE system is 
	
	\begin{equation}
		\begin{split}
			\frac{1}{3(A_1A_2A_3)^\frac{2}{3}}\left(\begin{array}{ccc}
				2   &   -\frac{A_1}{A_2}   &  -\frac{A_1}{A_3}\\
				-\frac{A_2}{A_1}  &  2   &  - \frac{A_2}{A_3}\\
				-\frac{A_3}{A_1} & -\frac{A_3}{A_2} & 2
			\end{array}\right)
		\end{split}
	\end{equation}
	Therefore, the system is a quasi-linear parabolic equation of degenerate type, which simply means the principal symbol is not invertible. This could easily be seen by the fact that the evolution for $\sum_{i=1}^3 \frac{1}{A_i}\partial_t A_i = \partial_t \log V$ does not involve second derivatives in the spatial direction (see Equation \eqref{EqnV}).


	An important quantity\footnote{This equals to twice the quantity $|\mathbf{T}|^2$ in \cite{FY}.} is 
	\begin{equation}\label{Tformula}
		\mathcal{T}=\tr \Big(Q^{-1}\langle \tau, \tau\rangle\Big)=V^{-2}\sum_{i=1}^3 \Big((\log f_i)'\Big)^2
	\end{equation}
	
	The evolution equation for $V$ is rather simple: 
	
	\begin{lemma} 
		\begin{equation}\label{EqnV}
			\partial_t V = \frac{1}{3}\mathcal{T}V
		\end{equation}
	\end{lemma}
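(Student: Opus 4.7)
The plan is a direct calculation starting from the definition $V = (A_1 A_2 A_3)^{1/3}$ and using the evolution equation \eqref{EqnA} for the $A_i$'s, with the crucial identity \eqref{f_1 f_2 f_3=1} performing the essential simplification at the end.

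First I would write
\[
\partial_t V \;=\; \frac{V}{3}\,\partial_t \log(A_1 A_2 A_3) \;=\; \frac{V}{3}\sum_{i=1}^3 \frac{\partial_t A_i}{A_i},
\]
so it suffices to show $\sum_i \frac{\partial_t A_i}{A_i} = \mathcal{T}$. Introducing $u_i := \log f_i = \log(A_i/V)$, the evolution equation \eqref{EqnA} reads
\[
\partial_t A_i \;=\; \Bigl( u_i' \,\frac{A_i}{V^2}\Bigr)'\;=\; \Bigl( u_i'\,\frac{f_i}{V}\Bigr)'.
\]
Expanding via the product rule and substituting $f_i' = u_i' f_i$ (from $u_i = \log f_i$), then dividing by $A_i = f_i V$, a short computation gives
\[
\frac{\partial_t A_i}{A_i} \;=\; \frac{u_i''}{V^2} \;+\; \frac{(u_i')^2}{V^2} \;-\; \frac{u_i'\,V'}{V^3}.
\]

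Summing over $i=1,2,3$ and invoking the identity \eqref{f_1 f_2 f_3=1}, which forces $u_1+u_2+u_3\equiv 0$, hence also $\sum_i u_i' = \sum_i u_i'' = 0$, the first and third terms drop out. What remains is precisely
\[
\sum_{i=1}^3 \frac{\partial_t A_i}{A_i} \;=\; \frac{1}{V^2}\sum_{i=1}^3 (u_i')^2 \;=\; \mathcal{T},
\]
by the formula \eqref{Tformula} for $\mathcal{T}$. Multiplying by $V/3$ yields the claim.

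There is no real obstacle here — the statement is essentially a bookkeeping identity. The only thing one has to notice is that the constraint $f_1 f_2 f_3 \equiv 1$ kills exactly the two terms that would otherwise prevent the sum from reducing to $\mathcal{T}$; this is consistent with the remark preceding the lemma that the trace $\sum_i A_i^{-1}\partial_t A_i = \partial_t \log V$ contains no second-order spatial derivatives, since the $u_i''$ terms cancel by the same constraint.
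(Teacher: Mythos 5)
Your proof is correct and takes essentially the same approach as the paper: write $\partial_t\log V=\tfrac13\sum_i\partial_t\log A_i$, substitute \eqref{EqnA}, and let the constraint $f_1f_2f_3\equiv1$ kill the unwanted terms. The only (cosmetic) difference is that you change variables to $u_i=\log f_i$ at the outset, which makes the cancellations via $\sum_i u_i'=\sum_i u_i''=0$ immediate, whereas the paper works throughout with $A_i$ and $V$ and collects the cancellations at the end — your organization is arguably a touch cleaner, but the computation is the same.
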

	\begin{proof}
		Notice the relationship between the hypersymplectic flow on $\mathbb{T}^4$ and the $G_2$-Laplacian flow on $\mathbb{T}^7$ \cite[Lemma 2.9]{FY}, and the relationship between the volume forms of the corresponding metrics on $4$-dimension and $7$-dimension \cite[Lemma 2.5]{FY}. The general evolution equation for the volume form in $G_2$-Laplacian flow \cite[Equation 3.8]{LW1} gives the stated result.   
	\end{proof}
	
	Calculating from equations \eqref{EqnA} and \eqref{EqnV} we get the evolution equations for $f_1, f_2, f_3$. 
	
	\begin{lemma}\label{f_ievolution-lemma}
		\[
		\partial_t f_i
		= \frac{1}{V}(\frac{f_i''}{V}- \frac{f_i'V'}{V^2}) -\frac{1}{3}f_i \mathcal{T}
		\;\;, \;\; i=1, 2, 3
		\]     
	\end{lemma}   
	\begin{proof}
		Since $A_i = f_i V$, we have 
		\[
		A_i' = f_i' V + f_i V'\;\;\;, 
		\;\;\;
		A_i''= f_i'' + 2 f_i' V' + f_i V''
		\]
		and therefore 
		\begin{equation}
			\begin{split}
				\partial_t f_i 
				& = \frac{\partial_t A_i}{V}- \frac{A_i}{V^2}\partial_t V \\
				& = \frac{1}{V}\Big(\frac{A_i''}{V^2} - 3 \frac{V'}{V^3}A_i' - (\frac{V'}{V^3})' A_i
				\Big) - \frac{1}{3}f_i \mathcal{T}\\
				& = \frac{1}{V}\Big(\frac{f_i'' + 2 f_i' V' + f_i V''}{V^2} - 3 \frac{V'}{V^3}(f_i' V + f_i V') - (\frac{V''}{V^3} - \frac{3V'^2}{V^4}) f_i V
				\Big) - \frac{1}{3}f_i \mathcal{T}\\
				& = \frac{1}{V}(\frac{f_i''}{V}- \frac{f_i'V'}{V^2}) -\frac{1}{3}f_i \mathcal{T}
			\end{split}
		\end{equation}
	\end{proof}      
	
	Because of the identity $f_1 f_2 f_3\equiv 1$, the three equations in Lemma \ref{f_ievolution-lemma} are not totally independent of each other. We replace the equation for $f_3$ by the equation for $V$ to get the following system:

	\begin{equation}\label{degenerate-system}
		\left\{\begin{array}{l}
			\partial_t f_1
			= \frac{1}{V}\Big(\frac{f_1''}{V}- \frac{f_1'V'}{V^2}\Big) -\frac{1}{3}f_1 \mathcal{T}\\
			\\
			\partial_t f_2
			= \frac{1}{V}\Big(\frac{f_2''}{V}- \frac{f_2'V'}{V^2}\Big) -\frac{1}{3}f_2 \mathcal{T}\\
			\\
			\partial_t V
			= \frac{1}{3}\mathcal{T}V
		\end{array}\right.
	\end{equation}
	where $\mathcal{T}=V^{-2}\Big\{\sum_{i=1}^2 \Big((\log f_i)'\Big)^2 + \Big((\log f_1 +\log f_2)'\Big)^2\Big\}$. 
	
	\begin{lemma}\label{equivalence-between-systems}
		Let $f_1, f_2, V\in C^2(S^1\times [0, t_0); \mathbb{R}^+)$ be three functions satisfying the PDE system \eqref{degenerate-system}, then the functions $A_1= f_1 V, A_2=f_2 V, A_3 = f_1^{-1} f_2^{-1} V$ satisfy the PDE system \eqref{EqnA}. 
	\end{lemma}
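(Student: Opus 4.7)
The key observation is that the hypothesized system \eqref{degenerate-system} has exactly three unknowns, whereas \eqref{EqnA} has three unknowns subject to no constraint; the constraint $f_1 f_2 f_3 = 1$ is automatically encoded by the definition of $A_3$. Indeed, by the definitions in the lemma, $A_1 A_2 A_3 = V^3$, so that $V = (A_1 A_2 A_3)^{1/3}$ and, defining $f_3 := A_3/V = (f_1 f_2)^{-1}$, the identity $f_1 f_2 f_3 \equiv 1$ holds by construction. The plan is then to verify \eqref{EqnA} for each $i$ by direct differentiation of $A_i = f_i V$.

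\textbf{The easy case $i=1,2$.} I first rewrite the right-hand side of \eqref{EqnA} in terms of $f_i$: since $A_i/V = f_i$ and $A_i/V^2 = f_i/V$, one has
\begin{equation*}
\left((\log \tfrac{A_i}{V})'\tfrac{A_i}{V^2}\right)' = \left(\tfrac{f_i'}{f_i}\cdot \tfrac{f_i}{V}\right)' = \left(\tfrac{f_i'}{V}\right)' = \tfrac{f_i''}{V}- \tfrac{f_i'V'}{V^2}.
\end{equation*}
On the other hand, $\partial_t A_i = V\,\partial_t f_i + f_i\,\partial_t V$, so substituting the first two equations of \eqref{degenerate-system} for $\partial_t f_i$ and the third for $\partial_t V$, the two $\mathcal{T}$-terms cancel and I obtain exactly $\frac{f_i''}{V}-\frac{f_i'V'}{V^2}$ for $i=1,2$.

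\textbf{The case $i=3$.} Here I need to show that $f_3:= (f_1 f_2)^{-1}$ satisfies the same PDE as $f_1,f_2$ in Lemma \ref{f_ievolution-lemma}; once this is established, the same product-rule argument as in the easy case finishes the job. Taking $\partial_t\log f_3 = -\partial_t\log f_1 -\partial_t\log f_2$ and substituting from \eqref{degenerate-system}, the desired equation
\begin{equation*}
\partial_t f_3 = \tfrac{1}{V}\left(\tfrac{f_3''}{V}-\tfrac{f_3'V'}{V^2}\right) - \tfrac{1}{3}f_3\mathcal{T}
\end{equation*}
reduces, after a short manipulation using $(\log f_3)' = -(\log f_1)'-(\log f_2)'$ (which kills the $V'/V^3$ terms), to the purely algebraic identity
\begin{equation*}
\tfrac{f_3''}{f_3} + \tfrac{f_1''}{f_1} + \tfrac{f_2''}{f_2} = \sum_{i=1}^3 \bigl((\log f_i)'\bigr)^2.
\end{equation*}
I verify this by differentiating $f_3 = (f_1 f_2)^{-1}$ twice: one computes
\begin{equation*}
\tfrac{f_3''}{f_3} = \bigl(\tfrac{f_1'}{f_1}+\tfrac{f_2'}{f_2}\bigr)^2 - \sum_{i=1}^2 \tfrac{f_i''}{f_i} + \sum_{i=1}^2 \tfrac{f_i'^2}{f_i^2},
\end{equation*}
and rearranging, together with $((\log f_3)')^2 = ((\log f_1)'+(\log f_2)')^2$, gives exactly the claimed identity.

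\textbf{Obstacle and conclusion.} There is no real obstacle; the content of the lemma is purely algebraic/computational, and the only nontrivial step is the identity above for $f_3 = (f_1 f_2)^{-1}$, which follows from two applications of the chain rule together with the definition of $\mathcal{T}$ in \eqref{Tformula}. Combining the three cases, $(A_1, A_2, A_3)$ satisfies \eqref{EqnA} throughout $[0, t_0)$.
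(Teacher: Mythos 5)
Your proof is correct and follows essentially the same route as the paper: both first verify that $f_3 = (f_1 f_2)^{-1}$ satisfies the same evolution equation as $f_1, f_2$, and then derive \eqref{EqnA} from $\partial_t A_i = V\partial_t f_i + f_i \partial_t V$. The only difference is presentational—you pass to logarithmic derivatives, which cleanly kills the $V'/V^3$ terms and reduces the verification to the tidy identity $\tfrac{f_3''}{f_3}+\tfrac{f_1''}{f_1}+\tfrac{f_2''}{f_2}=\sum_i((\log f_i)')^2$, whereas the paper substitutes $f_3 = f_1^{-1}f_2^{-1}$ directly and expands; your version is arguably more transparent.
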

	\begin{proof}
		From the data $f_1, f_2, V$ satisfying the equations \eqref{degenerate-system}, denote $f_3=f_1^{-1}f_2^{-1}$, then direct calculation shows that 
		\begin{equation}
			\begin{split}
				\frac{1}{V} &\Big(\frac{f_3''}{V}- \frac{f_3'V'}{V^2}\Big) -\frac{1}{3}f_3 \mathcal{T}\\
				& = V^{-2}(-f_1^{-2} f_2^{-1} f_1'' - f_1^{-1} f_2^{-2} f_2'' + 2 f_1^{-3}f_2^{-1} f_1'^2 + 2f_1^{-1} f_2^{-3} f_2'^2 + 2f_1^{-2}f_2^{-2} f_1' f_2') \\
				&  \quad + V^{-3}V'(f_1^{-2} f_2^{-1} f_1' + f_1^{-1} f_2^{-2} f_2')
				-\frac{1}{3}f_3 \mathcal{T}\\
				& = -f_1^{-2} f_2^{-1} V^{-1}(V^{-1}f_1'' - V^{-2}V' f_1')
				-f_1^{-1} f_2^{-2} V^{-1}(V^{-1}f_2'' - V^{-2}V' f_2') 
				+ \frac{2}{3} f_1^{-1} f_2^{-1}\mathcal{T}\\
				& = \partial_t f_3       
			\end{split}
		\end{equation}
		Then for $i=1, 2, 3$, we verify that 
		\begin{equation}
			\begin{split}
				\partial_t A_i
				& =V \partial_t f_i  + f_i \partial_t V
				= \frac{f_i''}{V} - \frac{f_i'V'}{V^2}\\
				& = \frac{1}{V} (\frac{A_i'}{V}- \frac{A_iV'}{V^2})' 
				- \frac{V'}{V^2}(\frac{A_i'}{V}- \frac{A_iV'}{V^2})\\
				& = \Big(\frac{A_i''}{V^2} - 3 \frac{V'}{V^3}A_i' - (\frac{V'}{V^3})' A_i
				\Big) \\
				& = \Big((\log \frac{A_i}{V})'\frac{A_i}{V^2}\Big)'
			\end{split}
		\end{equation}
	\end{proof}

	Though \eqref{degenerate-system}  is not parabolic, the next proposition shows that the system \eqref{degenerate-system} has short time solution for any smooth positive initial functions $f_1, f_2$ and $V$ on $S^1$.

	\subsection{Short time existence}\label{section-short-time-existence}
	We use $C^{1,\alpha}(S^1), \; C^{2,\alpha}(S^1)$ etc. to denote the Banach space with the usual Schauder norm, and $C^{\alpha, \frac{\alpha}{2}}(S^1\times [0, t_0]), \; C^{1+\alpha, \frac{1+\alpha}{2}}(S^1\times [0, t_0]), \; C^{2+\alpha, 1+\frac{\alpha}{2}}(S^1\times [0, t_0])$ etc. to denote the Banach space with the parabolic Schauder norm defined in  \cite[Section 1 in Chapter 1]{LSU}. The following proposition and Lemma \ref{equivalence-between-systems} indicates that the hypersymplectic flow \eqref{definite-triple-flow-of-simple-type} exists for a short time and preserves the simple type condition. 
	\begin{proposition}[(Short time existence)]\label{prop short time existence} 
		For any $0<\alpha^{\prime}<\alpha<1$, and  positive initial functions $f_{i}(\cdot, 0)\in C^{2,\alpha}(S^{1})$, $i=1,2$, $V(\cdot, 0)\in C^{1,\alpha}(S^{1})$,   there exists an $\epsilon>0$ and $c_{0}$ depending on
		\begin{itemize}
			\item  $|f_{i}(\cdot, 0)|_{C^{2,\alpha}(S^{1})}, \;\; i=1,2$, \;\; $|V(\cdot, 0)|_{C^{1,\alpha}(S^{1})}$;
			\item  the  positive lower bounds of the above functions;
			\item   $\alpha^{\prime},\ \alpha$, 
		\end{itemize} such that  \eqref{degenerate-system} initiated from the functions admits a unique solution in $C^{2+\alpha^{\prime},1+\frac{\alpha^{\prime}}{2}}(S^{1}\times [0,\epsilon])$ with norm $\leq c_{0}$. 
	\end{proposition}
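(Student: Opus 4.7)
\medskip

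\noindent\textbf{Proof plan.} The plan is to eliminate $V$ by integrating the third (degenerate) equation, reducing \eqref{degenerate-system} to a closed parabolic integro-differential system in $(f_1,f_2)$ alone, and then apply a Banach fixed-point argument in a small ball of parabolic H\"older space.

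\medskip

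\noindent\textit{Step 1 (Elimination of $V$).} Since the third equation of \eqref{degenerate-system} is pointwise an ODE in $t$ and can be rewritten as
\begin{equation*}
	\partial_t(V^2) \;=\; \tfrac{2}{3}\,G[f_1,f_2],\qquad G[f_1,f_2]\;=\;\sum_{i=1}^{2}\bigl((\log f_i)'\bigr)^2+\bigl((\log f_1+\log f_2)'\bigr)^2,
\end{equation*}
it integrates explicitly to
\begin{equation*}
	V(\theta,t)^2 \;=\; V(\theta,0)^2 \;+\; \tfrac{2}{3}\!\int_0^t G[f_1,f_2](\theta,s)\,ds,
\end{equation*}
exhibiting $V$ as a functional $V=V[f_1,f_2]$ of the unknowns. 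Substituting into the first two equations gives a closed parabolic integro-differential system for $(f_1,f_2)$.

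\medskip

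\noindent\textit{Step 2 (Fixed-point map).} Let $\bar f_i(\theta,t):=f_i(\theta,0)$ be the constant-in-time extension. For $\delta,\epsilon>0$ to be chosen small, set
\begin{equation*}
	\mathcal{B}\;=\;\Bigl\{\tilde F=(\tilde f_1,\tilde f_2)\in \bigl[C^{2+\alpha',1+\alpha'/2}(S^1\!\times\![0,\epsilon])\bigr]^2:\;\|\tilde f_i-\bar f_i\|_{C^{2+\alpha',1+\alpha'/2}}\!\leq\!\delta,\;\;\tilde f_i\!\geq\! c_1\!>\!0,\;\;\tilde f_i|_{t=0}\!=\!f_i(\cdot,0)\Bigr\}.
\end{equation*}
Given $\tilde F\in\mathcal B$, form $V[\tilde F]$ by the formula of Step 1. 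For $\epsilon$ small, $V[\tilde F]\geq c_2>0$ uniformly, and the coefficients
\begin{equation*}
	a\;=\;\frac{1}{V^{2}},\qquad b\;=\;\frac{V'}{V^{3}},\qquad \tilde{\mathcal T}\;=\;\frac{G[\tilde F]}{V^{2}}
\end{equation*}
all lie in $C^{\alpha',\alpha'/2}$ with bounds depending only on the data appearing in the statement. Solve the two decoupled linear parabolic equations
\begin{equation*}
	\partial_t f_i \;-\; a\,f_i'' \;+\; b\,f_i' \;+\; \tfrac{1}{3}\tilde{\mathcal T}\,f_i \;=\; 0, \qquad f_i|_{t=0}=f_i(\cdot,0),\qquad i=1,2,
\end{equation*}
by classical linear parabolic Schauder theory (\cite{LSU}), obtaining unique $f_i\in C^{2+\alpha',1+\alpha'/2}$ with norm controlled by $\|f_i(\cdot,0)\|_{C^{2,\alpha}}$ and the coefficient bounds. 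Define $\Phi(\tilde F):=(f_1,f_2)$.

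\medskip

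\noindent\textit{Step 3 (Self-map and contraction).} Using the Schauder estimate together with the slack in the H\"older exponent ($\alpha'<\alpha$), one gains a factor $\epsilon^{\kappa}$ (for some $\kappa>0$) when passing from quantities in $C^{2+\alpha,\cdots}$ at $t=0$ to differences in $C^{2+\alpha',\cdots}$ on $[0,\epsilon]$, which yields $\Phi(\mathcal B)\subset\mathcal B$ for small enough $\epsilon,\delta$. For contraction, note that the explicit formula for $V$ gives
\begin{equation*}
	\bigl\|V[\tilde F]-V[\hat F]\bigr\|_{C^{1+\alpha',(1+\alpha')/2}}\;\leq\;C\,\epsilon^{\kappa}\,\|\tilde F-\hat F\|_{C^{2+\alpha',1+\alpha'/2}},
\end{equation*}
hence the coefficients $a,b,\tilde{\mathcal T}$ depend Lipschitz-continuously on $\tilde F$ with a small factor $\epsilon^{\kappa}$. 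Applying Schauder estimates to the linear equation satisfied by $\Phi(\tilde F)-\Phi(\hat F)$ produces a contraction factor $C\epsilon^{\kappa}<1$ for $\epsilon$ small. The Banach fixed-point theorem then yields a unique $F=(f_1,f_2)$ in $\mathcal B$ with $\Phi(F)=F$. Setting $V:=V[F]$ and $f_3:=(f_1 f_2)^{-1}$, Lemma \ref{equivalence-between-systems} shows that $(f_1,f_2,V)$ solves \eqref{degenerate-system} and, equivalently, that $(A_1,A_2,A_3)$ solves \eqref{EqnA}.

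\medskip

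\noindent\textit{Main obstacle.} The technical heart of the argument is dealing with the borderline regularity of $V$. Since $V$ is obtained by integrating $G$, which depends on $f_i'$, $V(\cdot,t)$ is only as spatially regular as $f_i$ minus one derivative (initial data in $C^{1+\alpha}$ only), so the drift coefficient $V'/V^{3}$ sits exactly at the threshold $C^{\alpha,\alpha/2}$ demanded by linear parabolic Schauder theory. This is what forces $\alpha'<\alpha$ in the conclusion: the slight loss of H\"older exponent both absorbs the multiplicative/compositional losses incurred by $V^{-2},V'/V^{3},V^{-2}G$ and provides the small factor $\epsilon^{\kappa}$ used in both the self-map and contraction estimates. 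The quadratic gradient nonlinearity in $G$ is handled by routine product-rule estimates in parabolic H\"older spaces once the uniform positivity and bounds for $V$ on $[0,\epsilon]$ have been secured.
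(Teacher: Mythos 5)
Your overall strategy---integrate out $V$, reduce to a closed parabolic integro-differential system for $(f_1,f_2)$, run a fixed-point argument---is the same reduction the paper performs (the key identity $V^2(x_0,t)=V^2(x_0,0)+\tfrac23\int_0^t K\,ds$ is exactly \eqref{equ V2 integral formula}). But the fixed-point scheme you set up is genuinely different: you use a freeze-coefficients Picard iteration ($\Phi(\tilde F)$ solves the linear equation with coefficients frozen from $\tilde F$), whereas the paper runs a Newton-type perturbation off the solution of $L_0u=f$ via a quantitative version of \cite[Thm.~17.6]{GT}, and introduces a weakened norm $\widehat{C}^{\,2+\alpha,1+\alpha/2}$ that deliberately omits $\partial_t u$ from the parabolic norm. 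That omission is not cosmetic, and its absence is where your argument breaks.

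The gap is in Step 3, in the self-map claim. You assert that the H\"older-exponent slack $\alpha'<\alpha$ gives $\|\Phi(\tilde F)-\bar f_i\|_{C^{2+\alpha',1+\alpha'/2}}\le\epsilon^{\kappa}\cdot(\text{data})$. This is false for the \emph{full} parabolic norm. Although $\Phi(\tilde F)-\bar f_i$ has zero initial value, its time derivative at $t=0$ equals $a\,\bar f_i''-b\,\bar f_i'-\tfrac13\tilde{\mathcal T}\bar f_i$, which is an order-one quantity determined by the initial data; hence $|\partial_t(\Phi(\tilde F)-\bar f_i)|_{C^0}$ cannot be made small by shrinking $\epsilon$, and neither can the $\alpha'/2$--H\"older seminorm of $\partial_t(\Phi(\tilde F)-\bar f_i)$ in time. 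Only the \emph{spatial} derivatives of a zero-initial-data function pick up powers of $\epsilon$ when you drop to a weaker exponent; this is precisely the content of the paper's Lemma~\ref{lem chat is weaker than c}, which is stated only for the $\widehat C$ norm. Consequently $\Phi$ does not map a \emph{small} ball around $\bar F$ into itself, and your ``for small enough $\epsilon,\delta$'' cannot both hold.

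The repair is minor and does not require abandoning your scheme: take the radius $\delta$ of $\mathcal B$ to be a \emph{fixed} constant of order the Schauder bound $C_0$ for the linear problem (not small), so that $\Phi(\mathcal B)\subset\mathcal B$ holds trivially once you check positivity of $\Phi(\tilde F)_i$ (which does follow from the $C^0$-smallness of $\Phi(\tilde F)_i-f_i(\cdot,0)$, since that particular quantity is the undifferentiated one). Your contraction estimate in Step 3 is then the real engine, and it is sound: the $\epsilon^{\kappa}$ factor there comes from the time integral defining $V^2$, exactly as in \eqref{equ 1 in lem general existence of the integral equation}, not from the exponent drop. Alternatively, you could import the paper's $\widehat{C}$-norm device and linearize once as they do; either route closes the gap, but your current justification of the self-map step does not.
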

	
	We divide the first two equations in \eqref{degenerate-system} by $f_{i}$ respectively to find
	\begin{equation}\label{evolution-log-f_i}
		\frac{\partial \log{f_i}}{\partial t}= \frac{1}{V^{2}}\Big\{(\log f_i)^{\prime\prime}+\Big( (\log f_i)^{\prime}\Big)^{2}-\frac{1}{2}(\log f_i)^{\prime}(\log V^{2})^{\prime}\Big\} -\frac{1}{3}\mathcal{T}
	\end{equation}
	We will formulate Proposition \ref{prop short time existence}  more generally as Proposition \ref{prop general abstract short time existence}. 
	
	Besides standard arguments, there are two crucial observations for Proposition \ref{prop short time existence} (and \ref{prop general abstract short time existence}). 
	\begin{itemize}\item The third equation in \eqref{degenerate-system} can be integrated with respect to $t$,  i.e. it yields
		\begin{equation}\label{equ V2 integral formula}
			V^{2}(x_0, t)= \frac{2}{3}\int^{t}_{0}K\dd s+V^{2}(x_{0},0)
		\end{equation}
		where $K=\mathcal{T}V^{2}=\Big\{\sum_{i=1}^2 \Big((\log f_i)'\Big)^2 + \Big((\log f_1 +\log f_2)'\Big)^2\Big\}.$
		\item  In the linearization of the flow equation in the form of \eqref{equ the integral equation}, the integral terms are ``small" in some sense (see Lemma \ref{lem general existence of the integral equation}).
	\end{itemize}

	Let $w_{i}(x_0, t)=\log f_i (x_0, 0)\;\; (\forall t)$, then $u_{i}\triangleq \log{f_i}-w_{i}=0$ when $t=0$. Moreover, we can understand \eqref{equ V2 integral formula} as $V^{2}=V^{2}(u)$.  Let \begin{eqnarray}
		K_{0}(u^{\prime}) 
		& \triangleq & 2(u^{\prime}_{1})^{2}+2(u^{\prime}_{2})^{2}+4u^{\prime}_{1}w^{\prime}_{1}+4u^{\prime}_{2}w^{\prime}_{2}+2u^{\prime}_{1}w^{\prime}_{2}+2u^{\prime}_{2}w^{\prime}_{1}+2u^{\prime}_{1}u^{\prime}_{2}\\
		\widehat{w} 
		& \triangleq &  
		2\big(w^{\prime}_{1}w^{\prime}_{2}+(w^{\prime}_{1})^{2}+(w^{\prime}_{2})^{2}\big),
		\label{equ w hat}\\
		F
		& =& 
		\frac{2}{3}\int^{t}_{0}\widehat{w}\dd s+V^{2}(x_{0},0)=\frac{2}{3}t\widehat{w}+V^{2}(x_{0},0)\ \\
		& & \qquad \textrm{(which is the value of}\ V^{2}\ \textrm{when}\ u=0), \nonumber\\
		H_{i}
		& \triangleq &
		w^{\prime\prime}_{i}+(w^{\prime}_{i})^{2}-\frac{\widehat{w}}{3}-\frac{(\log F)^{\prime}}{2}, \; \; i=1, 2.\label{equ w underline}
	\end{eqnarray}
	The first two equations in \eqref{degenerate-system} are equivalent to 
	\begin{eqnarray}
		\mathscr{P}(u_{i}) \nonumber
		& \triangleq& V^{2}(u)\frac{\partial u_{i}}{\partial t}
		-\left\{u^{\prime\prime}_{i}+(u^{\prime}_{i})^{2}+2u^{\prime}_{i}w^{\prime}_{i}
		-\frac{1}{2}u_i^{\prime} \left( \log V^{2}(u) \right)^{\prime}
		-\frac{1}{2}w_i^{\prime} \left( \log \frac{V^{2}(u)}{F} \right)^{\prime}\right\}\\
		& & \qquad +\frac{K_{0}(u^{\prime})}{3} \nonumber\\
		& = &H_{i}, \; i=1, 2. \label{equ the integral equation}
	\end{eqnarray}
	Moreover, $K=K_{0}(u^{\prime})+\widehat{w}$. Hence \eqref{equ the integral equation} becomes an autonomous differential-integral equation, it suffices to solve it in $C_{0}^{2+\alpha,1+\frac{\alpha}{2}}$ (the subspace of $C^{2+\alpha,1+\frac{\alpha}{2}}$ consisting of those whose initial values are  $0$). 
	
	\begin{definition} We  define the weaker norm which only depends on the $C^{\alpha,\frac{\alpha}{2}}-$norms of the spatial derivatives of $u$:
		\[
		|u|_{\widehat{C}^{2+\alpha,\frac{1+\alpha}{2}}(S^{1}\times [0,t_{0}])}\triangleq |u|_{C^{\alpha,\frac{\alpha}{2}}(S^{1}\times [0,t_{0}])}+|u^{\prime}|_{C^{\alpha,\frac{\alpha}{2}}(S^{1}\times [0,t_{0}])}+|u^{\prime\prime}|_{C^{\alpha,\frac{\alpha}{2}}(S^{1}\times [0,t_{0}])}\]
	\end{definition}

	\begin{definition}\label{Def admissible nonlinear operators} We say that $\mathscr{Q}$ is an $\alpha-$admissible differential-integral operator on vector-valued functions $u\in C^{\infty} ( S^{1}\times [0,t_{0}], \R^{n})$ if 
		\begin{equation} \label{equ Def admissible nonlinear operators}
			\begin{split} 
				\mathscr{Q}(u)=    
				& \left( k_{1}+\int^{t}_{0}K_{1}(u^{\prime},u)\dd s\right)\frac{\partial u}{\partial t}-u^{\prime\prime}+b_{1}(u^{\prime},u)+b_{2}(u^{\prime},u)\left\{ \log\left( k_{0}+\int^{t}_{0}K_{0}(u^{\prime},u)\dd s\right) \right\}^{\prime}\nonumber
				\\ & +b_{0}(u^{\prime},u)\left\{\log\left( 1+\frac{\int^{t}_{0}K_2(u^{\prime},u)\dd s}{k_{2}}\right)\right\}^{\prime}\nonumber
			\end{split}
		\end{equation}
		for some 
		\begin{itemize}
			\item $\mathbb{R}^n$-valued polynomials $b_{0}(x,y), b_1(x,y), b_2(x, y)$, and scalar valued polynomials $K_0(x,y)$, $K_1(x, y)$, $K_2(x, y)$, whose coefficients are functions in $C^{\alpha,\frac{\alpha}{2}}(S^{1}\times [0,t_{0}])$, such that  $b_{1}(0,0)=b_2(0,0)=0$ and $K_2(0,0)=0 $;
			\item positive functions $k_{0},k_{1},k_{2}\in C^{\alpha,\frac{\alpha}{2}}(S^{1}\times [0,t_{0}])$.
		\end{itemize}
	\end{definition}
	\begin{remark}The difference between the last two terms in the formula of the above definition is that  the polynomial with no zeroth-order term  in $b_{2}(u^{\prime},u)\Big\{\log\big(k_{0}+\int^{t}_{0}K_{0}(u^{\prime},u)\dd s\big)\Big\}^{\prime}$ is outside the integral, while the one in the other term is inside. 
	\end{remark}
	Let  $L_{u}$ be the linearization of $\mathscr{Q}$ at $u$. Mainly because the co-efficient of $\frac{\partial u}{\partial t}$ in $\mathscr{Q}(u)$ only depend on the spatial derivatives of $u$, it's routine to verify for any $\alpha,\ t_{1} \in (0,1)$ that 
	\begin{equation}\label{equ solution to the linear equation is an approximate solution to the nonlinear one}
		|\mathscr{Q}(u)-L_{0}u|_{C^{\alpha,\frac{\alpha}{2}}(S^{1}\times [0,t_{1}])}\leq C|u|_{\widehat{C}^{2+\alpha, 1+ \frac{\alpha}{2}}(S^{1}\times [0,t_{1}])}|u|_{C^{2+\alpha, 1+\frac{\alpha}{2}}(S^{1}\times [0,t_{1}])}
	\end{equation}
	\begin{remark} From now on, none of the constants ``$C$'' depends on $\epsilon$ or $t_{0}$. 
	\end{remark}

	Because $\frac{\partial u}{\partial t}$ is not included in the $\widehat{C}-$norm, it's a  routine exercise to obtain
	\begin{lemma}\label{lem chat is weaker than c}When $t_{1}\leq \epsilon$ and $0<\alpha^{\prime}<\alpha<1$, we have for any $h$ that 
		\[
		|h|_{\widehat{C}_{0}^{2+\alpha^{\prime}, 1+ \frac{\alpha^{\prime}}{2}}(S^{1}\times [0,t_{1}])}\leq C\epsilon^{\frac{\alpha-\alpha^{\prime}}{2}}|h|_{C_{0}^{2+\alpha, 1+ \frac{\alpha}{2}}(S^{1}\times [0,t_{1}])}
		\]
	\end{lemma}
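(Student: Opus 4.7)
The plan is to bound each of the three constituent seminorms in $|h|_{\widehat{C}^{2+\alpha', 1+\alpha'/2}}$, namely $|D^{k}h|_{C^{\alpha',\alpha'/2}}$ for $k=0,1,2$, and extract a factor of $\epsilon^{(\alpha-\alpha')/2}$ by combining two ingredients: (a) vanishing at $t=0$ (coming from the subscript $0$), and (b) standard interpolation between $C^{0}$ and $C^{\alpha}$. The reason this works is precisely that $\widehat{C}$ omits the time derivative $\partial_{t}h$, which does not vanish at $t=0$ in general; only the spatial derivatives $D^{k}h$ with $k=0,1,2$ vanish at $t=0$, since $h(\cdot,0)\equiv 0$.

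First I would write $M = |h|_{C_{0}^{2+\alpha, 1+\alpha/2}(S^{1}\times[0,t_{1}])}$, note that $D^{k}h(\cdot,0)=0$ for $k=0,1,2$, and use the temporal part of the stronger norm to get the pointwise bound
$$
|D^{k}h(x,t)| \;=\; |D^{k}h(x,t)-D^{k}h(x,0)| \;\leq\; [D^{k}h]_{t,\alpha/2}\, t^{\alpha/2} \;\leq\; M\epsilon^{\alpha/2}
$$
for all $(x,t)\in S^{1}\times [0,t_{1}]$ with $t_{1}\leq \epsilon\leq 1$. In particular the $C^{0}$ part of $|D^{k}h|_{C^{\alpha',\alpha'/2}}$ is bounded by $M\epsilon^{\alpha/2}\leq M\epsilon^{(\alpha-\alpha')/2}$.

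Next I would control the two seminorms by interpolation. For the spatial seminorm, at each fixed $t$, the convexity inequality
$$
[f]_{C^{\alpha'}(S^{1})} \;\leq\; C\,[f]_{C^{\alpha}(S^{1})}^{\alpha'/\alpha}\,(2|f|_{C^{0}(S^{1})})^{1-\alpha'/\alpha}
$$
applied to $f=D^{k}h(\cdot,t)$ together with the pointwise bound above gives
$$
[D^{k}h(\cdot,t)]_{C^{\alpha'}(S^{1})} \;\leq\; C\,M^{\alpha'/\alpha}\,(M\epsilon^{\alpha/2})^{1-\alpha'/\alpha} \;=\; C\,M\,\epsilon^{(\alpha-\alpha')/2},
$$
uniformly in $t$. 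An entirely parallel one-dimensional interpolation in $t$, applied to $D^{k}h(x,\cdot)$ on $[0,t_{1}]$ with $C^{\alpha/2}$ seminorm bounded by $M$ and $C^{0}$ norm bounded by $M\epsilon^{\alpha/2}$, yields the same estimate for the temporal Hölder seminorm of exponent $\alpha'/2$. Summing the three contributions $k=0,1,2$ gives the claimed inequality.

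There is no real obstacle here; the statement is essentially the standard ``small time means small low-order norm'' estimate for functions vanishing at $t=0$, combined with Hölder interpolation. The only item requiring a line of care is that each of the three pieces (sup, spatial seminorm, temporal seminorm) making up the parabolic $C^{\alpha',\alpha'/2}$-norm of $D^{k}h$ carries the factor $\epsilon^{(\alpha-\alpha')/2}$, which is ensured by the vanishing of $D^{k}h$ at $t=0$ for $k\leq 2$.
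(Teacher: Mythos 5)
The paper offers no proof of this lemma (it is dismissed as a ``routine exercise''), so there is nothing to compare against line by line. Your argument is the natural one and is correct.

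One small technical remark to keep in mind when writing it out carefully. In the LSU convention for the parabolic H\"older norm $C^{2+\alpha,1+\frac{\alpha}{2}}$, the temporal H\"older seminorms that actually appear are $[D^{2}_{x}h]_{t,\alpha/2}$, $[D_{x}h]_{t,(1+\alpha)/2}$ and $[\partial_{t}h]_{t,\alpha/2}$; the seminorms $[h]_{t,\alpha/2}$ and $[D_{x}h]_{t,\alpha/2}$ are not members of the norm directly. They are nonetheless controlled by $M=|h|_{C_{0}^{2+\alpha,1+\frac{\alpha}{2}}}$ with an extra factor of a positive power of $t_{1}$: $|h(x,t)-h(x,s)|\leq \sup|\partial_{t}h|\,|t-s|\leq M\,t_{1}^{1-\alpha/2}|t-s|^{\alpha/2}$ and $|h'(x,t)-h'(x,s)|\leq [h']_{t,(1+\alpha)/2}|t-s|^{(1+\alpha)/2}\leq M\,t_{1}^{1/2}|t-s|^{\alpha/2}$. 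So the pointwise bound $|D^{k}h(x,t)|\leq M\epsilon^{\alpha/2}$ (indeed better, $M\epsilon$ and $M\epsilon^{(1+\alpha)/2}$ respectively for $k=0,1$) holds as you claim, but it follows from these slightly different seminorms rather than from a bound on $[D^{k}h]_{t,\alpha/2}$ for all $k\leq 2$. Similarly, the spatial seminorms $[D^{k}h(\cdot,t)]_{C^{\alpha}(S^{1})}$ for $k=0,1$ are not in the norm but are dominated by $\sup|D^{k+1}_{x}h|\leq M$ on the compact circle, so the interpolation step is sound. With these identifications made explicit, the two interpolations you carry out (in $x$ at each fixed $t$ and in $t$ at each fixed $x$), combined with the pointwise smallness from $h(\cdot,0)=0$, give precisely the stated factor $\epsilon^{(\alpha-\alpha')/2}$. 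The proof is complete and correct.
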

	\begin{proof}Under the  conditions, it suffices to apply the following  elementary inequality (which holds for any $v\in C_{0}^{\alpha,\frac{\alpha}{2}}$) to $h,\ h^{\prime},\ h^{\prime\prime}$.
		\begin{equation}\label{equ lem chat is weaker than c}
			|v|_{C_{0}^{\alpha^{\prime},\frac{\alpha^{\prime}}{2}}(S^{1}\times [0,t_{1}])}\leq C\epsilon^{\frac{\alpha-\alpha^{\prime}}{2}}|v|_{C_{0}^{\alpha,\frac{\alpha}{2}}(S^{1}\times [0,t_{1}])}
		\end{equation}
	\end{proof}
	\begin{remark} The above lemma is the only reason why we have to decrease the  ``$\alpha$'' a little bit. 
	\end{remark}
	\begin{lemma}\label{lem general existence of the integral equation}  For any $\alpha\in (0,1)$ and $t_{0}>0$, suppose $L$ is a linear differential-integral operator of a $\R^{n}$(vector)-valued function $h$ with the following formula,
		\[
		L(h)=U_{0}\frac{\partial h}{\partial t}-h^{\prime\prime}+A_{0}(h^{\prime},h)+\sum_{k=1}^{m}B_{k}\cdot \int_{0}^{t}A_{k}(h^{\prime\prime},h^{\prime},h)\dd s,\]
		where 
		\begin{itemize}
			\item  $U_{0}$ is an arbitrary scalar positive function in $C^{\alpha,\frac{\alpha}{2}}(S^{1}\times[0,t_{0}])$,
			\item the $B_{k}$'s $(1\leq k\leq m)$ are arbitrary $\R^{n}-$valued functions in  $C^{\alpha,\frac{\alpha}{2}}(S^{1}\times[0,t_{0}])$,
			\item the $A_{k}$'s $(0\leq k\leq m)$ are arbitrary $\R^{n}-$valued linear polynomials (in the entries of the vector variables) with $C^{\alpha,\frac{\alpha}{2}}(S^{1}\times[0,t_{0}])$ coefficients,
			\item the ``$\cdot$'' (in the last term) means the usual inner product of $\R^{n}$. 
		\end{itemize} 
		Then there exist
		\begin{itemize}
			\item  an $\epsilon$  depending on $t_{0}$, the positive lower bound of $U_{0}$ over $S^{1}\times [0,t_{0}]$, the $C^{\alpha,\frac{\alpha}{2}}(S^{1}\times[0,t_{0}])-$norms of $U_{0}$, the $B_{k}$'s,  the coefficients of the polynomials $A_{k}$ ($0\leq k\leq m$),
			\item  and a $C$ depending on all the above data except $t_{0}$,
		\end{itemize}	 such that for any $t_{1}\leq  \epsilon$,
		\begin{enumerate}
			\item $L$ admits a bounded inverse $L^{-1}$ from $C^{\alpha,\frac{\alpha}{2}}(S^{1}\times[0,t_{1}])$ to $C_{0}^{2+\alpha,1+\frac{\alpha}{2}}(S^{1}\times[0,t_{1}])$;
			\item the norm of $L^{-1}$ is $\leq C$. 
		\end{enumerate}
	\end{lemma}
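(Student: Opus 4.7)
The strategy is to split $L = L_0 + L_1$ into its genuinely parabolic part
\[
L_0(h) \triangleq U_0\,\partial_t h - h'' + A_0(h', h)
\]
and its integral (memory) part
\[
L_1(h) \triangleq \sum_{k=1}^{m} B_k \cdot \int_0^t A_k(h'', h', h)\,ds,
\]
invert $L_0$ by standard parabolic Schauder theory, and dispatch $L_1$ as a short-time small perturbation via a Neumann series. This parallels the role that \cite[Thm.~17.6]{GT} plays in the ensuing nonlinear argument of Proposition \ref{prop general abstract short time existence}.

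For the first step, $L_0$ is a vector-valued uniformly parabolic linear operator on $S^1 \times [0, t_1]$ whose principal symbol $U_0\partial_t - \partial_{xx}$ is scalar and diagonal in the $\mathbb{R}^n$-components (each component solves its own heat-type equation), with $U_0$ bounded away from zero; the linear polynomial $A_0$ provides only lower-order coupling with $C^{\alpha,\alpha/2}$ coefficients. The parabolic Schauder theory of \cite[Ch.~IV]{LSU}, applied componentwise on the closed one-manifold $S^1$ and combined with the usual Banach-space perturbation trick for the lower-order system coupling, delivers a bounded inverse
\[
L_0^{-1}: C^{\alpha,\alpha/2}(S^1\times [0, t_1]) \longrightarrow C_0^{2+\alpha, 1+\alpha/2}(S^1\times [0, t_1])
\]
with operator norm $C_0$ depending on the positive lower bound of $U_0$ and on the $C^{\alpha,\alpha/2}$-norms of $U_0$ and of the coefficients of $A_0$, uniformly for $t_1 \in (0, t_0]$.

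For the second step, the key gain is the elementary bound
\[
\Bigl|\int_0^t g(\cdot, s)\,ds\Bigr|_{C^{\alpha,\alpha/2}(S^1 \times [0, t_1])} \leq C\, t_1^{1 - \alpha/2}\,|g|_{C^{\alpha,\alpha/2}(S^1 \times [0, t_1])},
\]
proved by splitting into sup, temporal Hölder, and spatial Hölder parts, with the temporal contribution carrying the worst power via $|t-s| \leq t_1^{1-\alpha/2}|t-s|^{\alpha/2}$. Combined with the trivial estimate $|A_k(h'', h', h)|_{C^{\alpha,\alpha/2}} \leq C_k |h|_{C^{2+\alpha, 1+\alpha/2}}$, coming from linearity of the polynomials $A_k$ in their vector arguments and the $C^{\alpha,\alpha/2}$ control on their coefficients and on the $B_k$, this yields
\[
|L_1 h|_{C^{\alpha,\alpha/2}(S^1\times [0, t_1])} \leq C_1\, t_1^{1 - \alpha/2}\,|h|_{C^{2+\alpha, 1+\alpha/2}(S^1\times [0, t_1])}
\]
with $C_1$ independent of $t_1$.

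Finally, $Lh = f$ is equivalent to $(I + L_0^{-1} L_1)h = L_0^{-1} f$ in $C_0^{2+\alpha, 1+\alpha/2}$, where by the two previous steps $L_0^{-1}L_1$ has operator norm $\leq C_0 C_1 t_1^{1-\alpha/2}$. Choosing $\epsilon \in (0, t_0]$ so small that $C_0 C_1 \epsilon^{1-\alpha/2} \leq 1/2$, the Neumann series converges in operator norm for every $t_1 \leq \epsilon$, producing $L^{-1} = (I + L_0^{-1}L_1)^{-1} L_0^{-1}$ of norm $\leq 2 C_0$, a constant independent of $t_0$; uniqueness is automatic. The main obstacle I anticipate is purely at the bookkeeping level in Step~1: one must extract from \cite{LSU} a parabolic Schauder estimate for $L_0$ with vanishing initial data whose constant is uniform in $t_1 \leq t_0$, and check that the coupling by $A_0$ does not deteriorate it. This is standard for linear systems on closed spatial manifolds, but warrants care in the vector-valued formulation; once it is in hand, Step~2 is essentially inspection and Step~3 is mechanical.
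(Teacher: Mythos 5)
Your proposal is correct and follows essentially the same route as the paper: establish that the integral term has small $C^{\alpha,\alpha/2}$ norm when $t_1$ is small (with the same critical power $t_1^{1-\alpha/2}$ coming from the temporal H\"older seminorm), invert the purely differential parabolic part $U_0\partial_t - \partial_{xx} + A_0$ by Schauder theory, and absorb the integral term as a short-time small perturbation. The only presentational differences are that you invoke LSU Ch.~IV componentwise plus a Neumann series, whereas the paper establishes invertibility of $L_0$ by a continuity path $[(1-\lambda)U_0+\lambda]\partial_t - \partial_{xx} + (1-\lambda)A_0$ down to the heat operator (citing Schlag and GT Thm.~5.2) and then dispatches the small integral perturbation via GT Thm.~17.6 rather than a Neumann series; these are the same tool in different packaging, and neither changes the structure of the argument.
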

	\begin{proof}Because the terms in the sum $\sum_{k=1}^{m}B_{k}\cdot \int_{0}^{t}A_{k}(h^{\prime\prime},h^{\prime},h)\dd s$ are similar to each other for our purpose, and the highest-order derivative involved is
		$h^{\prime\prime}$,  without loss of generality, we assume $m=1$ and $A_{1}(h^{\prime\prime},h^{\prime},h)=h^{\prime\prime}$. The crucial observation is that the integral term is small when $\epsilon$ is small i.e.
		\begin{eqnarray}
			& &|\int_{0}^{t}h^{\prime\prime}\dd s|_{C^{\alpha,\frac{\alpha}{2}}(S^{1}\times [0,t_{1}])}\leq 2\epsilon^{1-\frac{\alpha}{2}}|h|_{C_{0}^{2+\alpha,1+\frac{\alpha}{2}}(S^{1}\times [0,t_{1}])}\label{equ 1 in lem general existence of the integral equation}
		\end{eqnarray}
		
		Assuming the above  inequality, the proof is complete by the invertibility of the linear parabolic differential operator $U_{0}\frac{\partial h}{\partial t}-h^{\prime\prime}+A_{0}(h^{\prime},h)$, and Theorem 17.6 in \cite{GT}.

		To prove  \eqref{equ 1 in lem general existence of the integral equation}, let $d_{S^{1}}$ be the intrinsic distance of $S^{1}$, it suffices to observe that for any $t,\ t_{3},\ t_{2}\leq t_{1}$,  
		\begin{equation}
			\begin{split}
				\frac{|\int_{0}^{t_{3}}h^{\prime\prime}(x_{0},s)\dd s-\int_{0}^{t_{2}}h^{\prime\prime}(x_{0},s)\dd s|}{|t_{3}-t_{2}|^{\frac{\alpha}{2}}}
				& \leq |t_{3}-t_{2}|^{1-\frac{\alpha}{2}}|h^{\prime\prime}|_{C^{0}(S^{1}\times [0,t_{1}])}\\
				& \leq \epsilon^{1-\frac{\alpha}{2}}|h|_{C_{0}^{2+\alpha,1+\frac{\alpha}{2}}(S^{1}\times [0,t_{1}])}
			\end{split}
		\end{equation}
		\begin{equation}
			\frac{|\int_{0}^{t}h^{\prime\prime}(x_{0,1},s)\dd s-\int_{0}^{t}h^{\prime\prime}(x_{0,2},s)\dd s|}{d^{\alpha}_{S^{1}}(x_{0,1},x_{0,2})}\leq t|h|_{C_{0}^{2+\alpha,1+\frac{\alpha}{2}}(S^{1}\times [0,t_{1}])}\leq \epsilon |h|_{C_{0}^{2+\alpha,1+\frac{\alpha}{2}}(S^{1}\times [0,t_{1}])}
		\end{equation}
		
		Though  standard, for the reader's convenience, we still say something about the invertibility of the linear parabolic differential operator $U_{0}\frac{\partial h}{\partial t}-h^{\prime\prime}+A_{0}(h^{\prime},h)$. 
		Using the path 
		$$L_{\lambda}\triangleq \left( (1-\lambda)U_{0}+\lambda\right) \frac{\partial h}{\partial t}-h^{\prime\prime}+(1-\lambda)A_{0}(h^{\prime},h),\ \lambda\in [0,1] $$
		connecting it to the standard heat operator $\frac{\partial h}{\partial t}-h^{\prime\prime}$, its invertibility  follows from the a priori estimate in \cite[Theorem 3]{Sch}, the invertibility of the heat operator, and the continuity method in \cite[Theorem 5.2]{GT}.
	\end{proof}
	\begin{proposition}[(General version of Proposition \ref{prop short time existence})] \label{prop general abstract short time existence}For any $0<\alpha^{\prime}<\alpha<1$,   any $\alpha-$admissible operator $\mathscr{Q}$, and any $f\in C^{\alpha,\frac{\alpha}{2}}(S^{1}\times [0,t_{0}])$ ($t_{0}>0$), there exists 
		\begin{itemize}
			\item an  $\epsilon>0$  depending on $t_{0}$, $|f|_{C^{\alpha,\frac{\alpha}{2}}(S^{1}\times [0,t_{0}])}$, $\mathscr{Q}$, $\alpha^{\prime},\ \alpha$,
			\item  and a $c_{0}$ depending on all the above data except $t_{0}$,\end{itemize}  such that the system 
		\begin{equation}\label{equ general abstract differential-integral system}\mathscr{Q}(u)=f,\ u(x_{0},0)=0 \end{equation}  
		admits a solution in $C^{2+\alpha^{\prime},1+\frac{\alpha^{\prime}}{2}}(S^{1}\times[0,\epsilon])$  with norm $\leq c_{0}$. Moreover, this solution is unique in $C^{2+\alpha^{\prime},1+\frac{\alpha^{\prime}}{2}}(S^{1}\times[0,t_{1}])$  for any $t_{1}\leq \epsilon$.
	\end{proposition}
	\begin{remark}\label{rmk independent of t0} In Lemma \ref{lem general existence of the integral equation} and Proposition \ref{prop general abstract short time existence}, except that $\epsilon\leq t_{0}$, none of the bounds (constants)  depends on $t_{0}$. \end{remark}
	\begin{proof}[of Proposition \ref{prop general abstract short time existence}:] For any $u$, routine computations and $\alpha-$admissibility show that $L_{u}$ (the linearization of  $\mathscr{Q}$ at $u$) satisfies the conditions in Lemma \ref{lem general existence of the integral equation}, which will be 
		frequently implemented in the following (in \eqref{equ 2 proof of prop general abstract short time existence} for example). 
		
		We first solve the linear equation $L_{0}u=f$ by Lemma \ref{lem general existence of the integral equation}. The crucial observation is that, by \eqref{equ solution to the linear equation is an approximate solution to the nonlinear one} and Lemma \ref{lem chat is weaker than c}, when $\epsilon$ is small, $u$ is almost a  solution to \eqref{equ general abstract differential-integral system}. Then it suffices to perturb $u$  to obtain a genuine solution. For the reader's convenience, we still include the following standard argument which is a quantitative version of \cite[Theorem 17.6]{GT}.

		Let $\sigma\triangleq \mathscr{Q}(u)-L_{0}u$ and thus $\mathscr{Q}(u)=\sigma+f$, we then need to solve the equation:
		\begin{equation}\label{equ 0 proof of prop general abstract short time existence} \widetilde{\mathscr{Q}}(v,\sigma)\triangleq \mathscr{Q}(u+v)-\mathscr{Q}(u)+\sigma=0\ (\textrm{which can be trivially solved by}\ v=\sigma=0)
		\end{equation}
		
		We define
		the ``quadratic part'' of $\mathscr{Q}(u+v)$ in $v$ as the following:
		$$E_{u}(v)\triangleq \mathscr{Q}(u+v)- \left( \mathscr{Q}(u)+L_{u}(v)\right)\ (\textrm{   just subtract}\ \mathscr{Q} \ \textrm{by the linearization at}\ v=0)$$
		For any $t_{1}\leq \epsilon$, it's routine to verify via Definition \ref{Def admissible nonlinear operators} that 
		\begin{equation}\label{equ 1 proof of prop general abstract short time existence}
			|E_{u}(v)|_{C^{\alpha^{\prime},\frac{\alpha^{\prime}}{2}}(S^{1}\times [0,t_{1}])}\leq C|v|^{2}_{C_{0}^{2+\alpha^{\prime},1+\frac{\alpha^{\prime}}{2}}(S^{1}\times [0,t_{1}])}
		\end{equation}
		Then solving \eqref{equ 0 proof of prop general abstract short time existence} is equivalent to finding a fixed point of the map 
		\begin{equation} 
			\begin{split} 
				\mathscr{T} : C_{0}^{2+\alpha^{\prime},1+\frac{\alpha^{\prime}}{2}}(S^{1}\times[0,t_{1}])
				& \longmapsto  C_{0}^{2+\alpha^{\prime},1+\frac{\alpha^{\prime}}{2}}(S^{1}\times[0,t_{1}])\\
				v
				& \longmapsto 
				v-L^{-1}_{u}\widetilde{\mathscr{Q}}(v,\sigma)=-L^{-1}_{u}\sigma-L^{-1}_{u}E_{u}(v)
			\end{split}
		\end{equation}

		Hence, combining \eqref{equ solution to the linear equation is an approximate solution to the nonlinear one}, Lemma \ref{lem chat is weaker than c}, and \ref{lem general existence of the integral equation}, by the proof of \cite[Theorem 17.6]{GT}, there exists a $\delta$ depending quantitatively on the data in Proposition \ref{prop general abstract short time existence} such that $\mathscr{T}$ is contracting when $|v|_{C_{0}^{2+\alpha^{\prime},1+\frac{\alpha^{\prime}}{2}}(S^{1}\times [0,t_{1}])}\leq \delta$. Let $L=L_{u}$, then Lemma \ref{lem general existence of the integral equation}(i) yields the inverse $L_{u}^{-1}$. Moreover, \eqref{equ solution to the linear equation is an approximate solution to the nonlinear one}, Lemma \ref{lem chat is weaker than c}, and the bound on $L^{-1}_{u}$ in Lemma \ref{lem general existence of the integral equation}(ii) imply that 
		\begin{equation}\label{equ 2 proof of prop general abstract short time existence}
			|L^{-1}_{u}\sigma|_{C_{0}^{2+\alpha^{\prime},1+\frac{\alpha^{\prime}}{2}}(S^{1}\times [0,t_{1}])}
			\leq C_{u,f}\epsilon^{\frac{\alpha-\alpha^{\prime}}{2}}|f|^{2}_{C_{0}^{2+\alpha, 1+ \frac{\alpha}{2}}(S^{1}\times [0,t_{1}])}
		\end{equation}
		for $t_1\leq \epsilon$ (Note that  $|f|_{C_{0}^{2+\alpha, 1+ \frac{\alpha}{2}}(S^{1}\times [0,t_{1}])}$ is not assumed to be small). Though the constant $C_{u,f}$ above might depend on $u$ and $f$, it does not depend on $\epsilon$.
		
		Thus when $\epsilon$ is  small enough with respect to $\delta$,   the first iteration $\mathscr{T}(0)\ (=-L^{-1}_{u}\sigma)$ stays in this $\delta-$neighborhood of $0$. Then  \cite[Theorem 17.6]{GT} (when  ``$B_{1}$''=\; $C_{0}^{2+\alpha^{\prime},1+\frac{\alpha^{\prime}}{2}}(S^{1}\times [0,t_{1}])$, ``$B_{2}$''=``$X$''=$C^{\alpha^{\prime},\frac{\alpha^{\prime}}{2}}(S^{1}\times [0,t_{1}])$ yields the unique solution to \eqref{equ 0 proof of prop general abstract short time existence} (for  $\sigma= \mathscr{Q}(u)-L_{0}u$).
	\end{proof}

	\begin{proof}[of Proposition \ref{prop short time existence}:] First we note that, by \eqref{equ w hat}--\eqref{equ w underline}, for $i=1,2$ and $\alpha\in (0,1)$, the right hand side of \eqref{equ the integral equation} satisfies 
		\begin{eqnarray}
			&	|H_{i} & |_{ C^{\alpha,\frac{\alpha}{2}}(S^{1}\times [0, t_0])}\nonumber
			\\& \leq &  |w_{1}^{\prime\prime}|_{C^{\alpha}(S^{1})}+ |w_{2}^{\prime\prime}|_{C^{\alpha}(S^{1})}+C |w_{1}^{\prime}|^{2}_{C^{\alpha}(S^{1})}+ C|w_{2}^{\prime}|^{2}_{C^{\alpha}(S^{1})}\nonumber 
			\\& & \quad +C|\big( \log \big(\frac{2t\widehat{\omega}}{3}+V^{2}(x_{0},0)\big)\big)^{\prime}|_{C^{\alpha,\frac{\alpha}{2}}(S^{1}\times [0,t_{0}])} \nonumber
			\\& \leq &  |w_{1}|_{C^{2,\alpha}(S^{1})}+ |w_{2}|_{C^{2,\alpha}(S^{1})}+C |w_{1}|^{2}_{C^{1,\alpha}(S^{1})}+ C|w_{2}|^{2}_{C^{1,\alpha}(S^{1})}\nonumber 
			\\& & \quad +\frac{C}{\inf_{(x_{0},t)\in S^{1}\times [0,t_{0}]}\big( \frac{2t\widehat{\omega}}{3}+V^{2}(x_{0},0)\big)}(|\widehat{\omega}|_{C^{\alpha}(S^{1})}+|V^{2}(x_{0},0)|_{C^{1,\alpha}(S^{1})})\;\;
		\end{eqnarray}
		Using that $\inf_{(x,t)\in S^{1}\times [0,t_{0}]}\big( \frac{2t\widehat{\omega}}{3}+V^{2}(x_{0},0)\big)\geq \inf_{x\in S^{1}}V^{2}(x_{0},0)>0$,  the above bound on $H_{i}$  only depends on the initial values. 	On the other hand, in terms of $u\triangleq \left| \begin{array}{c}u_{1}\\ u_{2}\end{array}\right|$ (a $\R^{2}-$valued function), the operator $\mathscr{P}$ in \eqref{equ the integral equation} is $\alpha-$admissible, so Proposition \ref{prop general abstract short time existence} yields the solvability of \eqref{equ the integral equation}. The proof of Proposition \ref{prop short time existence} is complete. 
	\end{proof}
	
	
	\section{Long time existence and Convergence}
	
	\subsection{A priori estimates}
	
	\subsubsection{Geometries all quasi-isometric}\label{section-quasi-isometric}
	Applying the \emph{maximum principle} to functions $f_1$, $f_2$ and $f_3$ (defined in Equation \eqref{EqnAVf}) satisfying the equations in Lemma \ref{f_ievolution-lemma}, we can derive that the maximal value of $f_i$ for each $i=1,2,3$ is non-increasing in $t$ (because the quantity $\mathcal{T}$ is nonnegative). Thus, $f_i$'s are uniformly bounded from above. On the other hand, since $f_1f_2f_3\equiv 1$ (see Equation \eqref{f_1 f_2 f_3=1}), we could deduce that they are also uniformly bounded below away from $0$, i.e. 
	
	\begin{equation}\label{f_ibound}
		\frac{1}{C}\leq f_i\leq C, \;\; i=1, 2, 3. 
	\end{equation}
	for some $C$ depending only on the initial values of $\sup_{S^1} f_1, \sup_{S^1} f_2$, and $\sup_{S^1} f_3$.\\

	We then list the only non-vanishing component of the various geometric quantities: 
	\begin{itemize}
		\item Christoffel symbols:
		\begin{equation} \label{Christoeffel-symbols}
			\Gamma_{00}^{\;\; 0} = \frac{V'}{V} \; , \;  \Gamma_{ii}^{\;\; 0} = -\frac{f_i'}{2V^2}\; , \;\; \Gamma_{i0}^{\;\; i}= \frac{f_i'}{2 f_i}\;\;\; ; \; \; i=1, 2, 3
		\end{equation}
		
		\item Riemannian curvature:
		\begin{equation}\label{Riemannian-curvature}
			R_{0i0}^{\;\;\;\;\; i}= \frac{f_i''}{2f_i} - \frac{f_i'^2}{4 f_i^2} - \frac{V' f_i'}{2V f_i} \;\;\; ; \; \; i=1, 2, 3
		\end{equation}
		
		\item Ricci curvature:
		\begin{equation} \label{Ricci-curvature}
			R_{00}= -\frac{1}{4} \sum_{i=1}^3 \frac{f_i'^2}{f_i^2}\;\; , \;\; R_{ii} = -\frac{f_i''}{2V^2} + \frac{V'}{2V^3}f_i' + \frac{f_i'^2}{4 V^2 f_i} \;\;\; ; \; \; i=1, 2, 3
		\end{equation}
		
		\item Scalar curvature:
		\begin{equation}\label{Scalar-curvature}
			R= - \frac{1}{2 V^2} \sum_{i=1}^3 \frac{f_i'^2}{f_i^2}= -\frac{1}{2}\mathcal{T}
		\end{equation}
	\end{itemize}

	Each meridian circle, i.e. the factor $S^1_{\bar x}=\{x_i =\bar x_i, i=1, 2, 3\}$ for fixed $\bar x$, is always a closed geodesic. This could be seen by either direct verification of geodesic equations (with arc length parametrization in the $S^1$ factor) using the above formulas for Christoffel symbols, or by the general fact from \emph{warped Riemannian geometry}.  Or else, let $p, q$ be two points on the meridian $S^1_{\bar x}$ and let $\gamma: [0, 1]\to \mathbb{T}^4$ be any smooth curve connecting $p, q$ , and let $\tilde \gamma$ be its projection to the meridian, then 
	$$l(\gamma)=\int_0^1\sqrt{V^2(x_0(t))x'_0(t)^2 + \sum_{i=1}^3 f_i(x_0(t))x_i'(t)^2}\dd t\geq \int_0^1V(x_0(t))|x_0'(t)|\dd t=l(\tilde \gamma).$$ 
	This shows that any geodesic in $\mathbb{T}^4$ connecting $p, q$ must have image contained in $S^1_{\bar x}$ and therefore must be one of the two meridian arcs (with arc length parametrization) aiming at $q$ in two opposite directions.  It follows that 
	\[
	d(p,q)=\frac{1}{2}\int_{S^1_{\bar x}} V(x_0)\dd x_0
	\]
	if the two meridian arcs connecting $p, q$ have the same length. 
	
	The next proposition shows that the meridian circles always have uniformly bounded length, i.e. 
	
	\begin{proposition}[(total volume estimate)]\label{total-volume-estimate}
		Let $\bar l= \frac{1}{6(2\pi)^3}\Big([\omega_1]^2+[\omega_2]^2+ [\omega_3]^2\Big)[\mathbb{T}^4]$, and $\underline l=\int_{S^1}V(x_0, 0)\dd x_0$. Suppose the hypersymplectic flow exists on $[0, t_0)$, the it holds that $l_{g(t)}(S_{\bar x}^1)= \int_{S^1} V(x_0, t)\dd x_0 $ lies between $\underline l$ and $\bar l$ for all $t\in [0, t_0)$. Meanwhile, the total volume is always bounded below by $\text{Vol}_{g(0)}(\mathbb{T}^4, g(0))$ and above by $\frac{1}{6}\Big([\omega_1]^2+[\omega_2]^2+ [\omega_3]^2\Big)[\mathbb{T}^4]$. 
	\end{proposition}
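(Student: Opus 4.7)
The plan is to prove the two inequalities in parallel by converting everything into statements about the scalar function $V$, and then reading off the length of the meridian and the total volume as its integral against the standard measure.

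\textbf{Step 1 (lower bounds from monotonicity of $V$).} The evolution equation \eqref{EqnV} reads $\partial_t V = \tfrac{1}{3}\mathcal{T}V$, and by the formula \eqref{Tformula} for $\mathcal{T}$ we have $\mathcal{T}\geq 0$ pointwise. Hence $V(x_0,t)\geq V(x_0,0)$ for every $x_0$ and every $t\in [0,t_0)$. Integrating in $x_0$ gives immediately
\[
\mathrm{len}_{g(t)}(S^1_{\bar x})=\int_{S^1}V(x_0,t)\dd x_0\;\geq\;\int_{S^1}V(x_0,0)\dd x_0=\underline l.
\]
Because the metric is a multiply warped product with $f_i, V$ depending only on $x_0$, the total volume factorises as $\mathrm{Vol}_{g(t)}(\mathbb{T}^4)=(2\pi)^3\int_{S^1}V(x_0,t)\dd x_0=(2\pi)^3\,\mathrm{len}_{g(t)}(S^1_{\bar x})$, so the same monotonicity gives $\mathrm{Vol}_{g(t)}(\mathbb{T}^4)\geq \mathrm{Vol}_{g(0)}(\mathbb{T}^4)$.

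\textbf{Step 2 (upper bounds from the topological/AM--GM inequality).} The flow \eqref{definite-triple-flow} is exact ($\partial_t\omega_i=\dd\tau_i$), so $[\omega_i(t)]=[\omega_i(0)]$ in $H^2(\mathbb{T}^4;\mathbb R)$; therefore $[\omega_i]^2[\mathbb{T}^4]=\int_{\mathbb T^4}\omega_i\wedge\omega_i$ is $t$-independent. The matrix $Q$ of inner products satisfies $\det Q=1$ pointwise, so the AM--GM inequality applied to its (positive) eigenvalues yields
\[
\tr Q\;\geq\;3(\det Q)^{1/3}=3,\qquad\text{equivalently}\qquad \sum_{i=1}^3\omega_i\wedge\omega_i\geq 6\mu.
\]
Integrating over $\mathbb{T}^4$ gives $\mathrm{Vol}_{g(t)}(\mathbb{T}^4)\leq \tfrac{1}{6}\sum_{i=1}^3[\omega_i]^2[\mathbb{T}^4]$, which combined with the product-structure identity from Step~1 yields $\mathrm{len}_{g(t)}(S^1_{\bar x})\leq \bar l$.

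\textbf{Main obstacle.} There is essentially no obstacle; the proof is a combination of two cleanly separated facts. The only point worth checking carefully is the product-structure identity $\mathrm{Vol}=(2\pi)^3\,\mathrm{len}$, which relies on the fact that $V$ (and hence $\mu = V\,\dd x_{0123}$) is truly a function of $x_0$ alone in the ``simple type'' setting. This is guaranteed by the short-time existence result (Proposition~\ref{prop short time existence}) and Lemma~\ref{equivalence-between-systems}, which together show that the flow preserves the simple type on the interval $[0,t_0)$. With this structural fact in hand, both bounds are two-line consequences of $\mathcal{T}\geq 0$ and pointwise AM--GM on $Q$.
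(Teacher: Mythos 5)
Your proof is correct and follows essentially the same route as the paper: the upper bound comes from AM--GM on the eigenvalues of $Q$ (using $\det Q = 1$) together with the fact that $\int_{\mathbb{T}^4}\omega_i\wedge\omega_i$ is a topological invariant, the lower bound comes from $\partial_t V = \tfrac13\mathcal{T}V\geq 0$, and the two lengths/volumes are related by the product identity $\mathrm{Vol}=(2\pi)^3\int_{S^1}V\,\dd x_0$. The only (minor) difference is that you spell out the AM--GM step as $\tr Q\geq 3(\det Q)^{1/3}$, which is cleaner than the paper's shorthand $\tr Q\geq 3\det Q$ (the two are equivalent only because $\det Q\equiv 1$), and you make explicit the monotonicity of $V$ that the paper summarizes as ``the flow always increases the volume.''
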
         
	
	\begin{proof}It is a general fact that the volume of the corresponding Riemannian metrics along the hypersymplectic flow is uniformly bounded by topological data. The reasoning is the following simple inequality. 
		\begin{align*}
			\frac{1}{2}\sum_{i=1}^3\int_{\mathbb{T}^4}\omega_i\wedge\omega_i 
			& =\int_{\mathbb{T}^4} \tr Q\; \mu 
			\geq 3\int_{\mathbb{T}^4} \det Q \; \mu\\
			& =3\text{Vol}_{g(t)}\Big(\mathbb{T}^4, g(t)\Big)
			=3(2\pi)^3 \int_{S^1} V(x_0, t)\mathrm{d}x_0
		\end{align*}
		The lower bound is because the flow always increases the volume. We further get the length bound of the geodesic meridians by the discussion before this proposition. 
	\end{proof}  
	
	This is enough to prove the following \emph{non-collapsing} result.

	\begin{proposition}[(injectivity radius estimate, volume ratio estimate)]\label{injectivity}
		Suppose the hypersymplectic flow exists on $[0, t_0)$ with $t_0\leq \infty$. For the family of the corresponding Riemannian metrics $(\mathbb{T}^4, g(t)), \; t\in [0, t_0)$ above, if we rescale the metrics such that $|\text{Rm}|$ is bounded above by $1$, then the injectivity radius of $g(t)$ is uniformly bounded from below (In other words, $inj(\mathbb{T}^4, g(t))\geq \frac{i_0}{\sup_{\mathbb{T}^4} |\text{Rm}(g(t))|_{g(t)}^\frac{1}{2}+1}$ for some $i_0>0$ depending only on the initial data). Moreover, $g(t)$ has a uniform lower bound on the {volume ratio} $\frac{\text{Vol}_{g(t)}B_{g(t)}(p,r)}{r^4}$ for $r\leq r_0$.  \footnote{This is local $\kappa$-noncollapsing property in Perelman's sense. Fortunately, it holds even as $t\to \infty$ in our case. }
	\end{proposition}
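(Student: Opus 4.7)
The plan is to first establish the local volume ratio lower bound directly from the warped-product structure of $g(t)$, and then deduce the injectivity radius bound from the curvature bound via a standard criterion.

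\textbf{Step 1 (volume ratio at small scale).} Fix $p = (x_0^*, \mathbf{x}^*)$ and a scale $r \le r_0$, where $r_0$ will depend only on the initial data. Two structural facts already established in the paper are crucial: (i) each meridian $\gamma_p = \{x_i = x_i^*,\; i=1,2,3\}$ is a closed geodesic of arc length $L(t) = \int_{S^1} V\,dx_0 \ge \underline l$ (Proposition \ref{total-volume-estimate}); (ii) each $\mathbb{T}^3$-fiber $\Sigma_{x_0}$ carries the flat metric $\sum_i f_i\,dx_i^2$ with $f_i \in [1/C, C]$ and, crucially, $f_1 f_2 f_3 \equiv 1$ so that the fiber Riemannian volume form equals the coordinate form $dx_1\,dx_2\,dx_3$. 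By the triangle inequality, if $q_1 \in \gamma_p$ has arc-length parameter $|s_1| \le r/2$ and $q \in \Sigma_{x_0(q_1)}$ has fiber distance $\le r/2$ to $q_1$, then $d_{g(t)}(p,q) \le r$. Hence
\[
B_{g(t)}(p, r) \;\supseteq\; E := \bigl\{ (x_0, \mathbf{x}) : |s(x_0) - s(x_0^*)| \le r/2,\; d_{\Sigma_{x_0}}(\mathbf{x}^*, \mathbf{x}) \le r/2 \bigr\},
\]
where $s(x_0) = \int_{x_0^*}^{x_0} V\,d\sigma$. Inside each fiber, the coordinate box $\{|x_i - x_i^*| \le r/(2\sqrt{3 f_i})\}$ lies in $\{d_{\Sigma_{x_0}}(\mathbf{x}^*,\cdot) \le r/2\}$ by Cauchy--Schwarz, and its fiber Riemannian volume equals $\prod_i r/\sqrt{3 f_i} = r^3/(3\sqrt 3)$, independent of $f_i$ thanks to $f_1 f_2 f_3 \equiv 1$. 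Using $d\mathrm{vol}_{g(t)} = V\,dx_0 \wedge d\mathrm{vol}_{\Sigma_{x_0}}$ and the identity $\int V\,dx_0 = r$ on $\{|s - s^*| \le r/2\}$, I conclude
\[
\mathrm{Vol}_{g(t)}\bigl(B_{g(t)}(p, r)\bigr) \;\ge\; \mathrm{Vol}(E) \;\ge\; \frac{r^4}{3\sqrt 3}.
\]
The choice $r_0 := \min(\underline l,\; 2\pi\sqrt{3/C})$ guarantees that the meridian arc does not wrap around $\gamma_p$ and the fiber box stays inside the fundamental domain of $\mathbb{T}^3$, so the argument is valid uniformly in $t$.

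\textbf{Step 2 (injectivity radius).} With the uniform volume ratio constant $\kappa_0 = 1/(3\sqrt 3)$ at scales $\le r_0$ and the hypothesis $|\mathrm{Rm}_{g(t)}| \le K := \sup|\mathrm{Rm}|$, I would invoke the standard injectivity radius estimate of Cheeger--Gromov--Taylor: whenever $|\mathrm{Rm}| \le K$ and $\mathrm{Vol}(B(p, r))/r^n \ge \kappa_0$ for some $r \le 1/\sqrt K$, then $\mathrm{inj}(p) \ge c(n, \kappa_0)\,r$. Applying this with $r = \min(r_0,\, 1/\sqrt K)$ and using $\min(a,b) \ge ab/(a+b)$, I obtain
\[
\mathrm{inj}(\mathbb{T}^4, g(t)) \;\ge\; \frac{c(\kappa_0)\,r_0}{1 + r_0 \sqrt{K}} \;\ge\; \frac{i_0}{\sqrt{\sup|\mathrm{Rm}|} + 1}
\]
for $i_0$ depending only on the initial data. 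The lower bound on the volume ratio at all scales $\le r_0$ is already the content of Step 1.

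\textbf{Main obstacle.} The one point that might initially appear delicate is the absence of any a priori pointwise upper bound on $V$ at this stage of the paper. The argument above avoids this entirely: the identity $f_1 f_2 f_3 \equiv 1$ removes the $f_i$-anisotropy from the fiber volume, and the meridian length lower bound $L(t) \ge \underline l$ (rather than a pointwise $V$-bound) supplies the base direction contribution. Thus the non-collapsing persists for all $t \in [0, t_0)$ with constants depending only on the initial data, as asserted in the footnote.
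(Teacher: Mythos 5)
Your proof is correct and takes essentially the same route as the paper: pass to the arc-length coordinate along the meridian (so $V$ drops out of the metric), use $f_1 f_2 f_3 \equiv 1$ to compute the fiber volume of a coordinate box, and invoke Cheeger--Gromov--Taylor. The only variation is cosmetic --- you adapt the fiber-box sidelengths to $f_i^{-1/2}$, which gives the universal volume-ratio constant $1/(3\sqrt 3)$, whereas the paper uses a fixed-side box $\Omega_r \subset B(p, 2\sqrt C r)$ and obtains $1/C^2$; both yield the same conclusion.
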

	
	\begin{proof}
		For any fixed $t\in [0, t_0)$, using a new intrinsic coordinate $\{y, x_1, x_2, x_3\}$ where $\frac{\dd y}{\dd x_0}= V(x_0, t)$, the Riemannian manifold  $\big(\mathbb{T}^4, g(t)\big)$ is written as the multiply warped product $S^1\times_{\tilde f_1} S^1\times_{\tilde f_2}S^1\times_{\tilde f_3} S^1$, i.e. 
		\begin{equation} \label{first-warped-metric}
			g(t) =V^2(x_0, t)\dd x_0^2 + \sum_{i=1}^3 f_i(x_0, t)\dd x_i^2
			= \dd y^2 + \sum_{i=1}^3 \tilde f_i(y, t)\dd x_i^2
		\end{equation}  
		for $\tilde f_i (y, t) = f_i (x_0(y), t)$. Let $p\in \mathbb{T}^4$ have coordinate $(\bar y, \bar x_1, \bar x_2, \bar x_3)$, and for any $r\leq \min \{\frac{1}{2} \underline l, \pi\}$ define a coordinate chart
		\[
		\Omega_r=\{(y, x_1, x_2, x_3)\in \mathbb{R}^4|\; \left|y-\bar y\right| < r, \left|x_i -\bar x_i\right| < r, i=1, 2, 3\}.
		\]
		It is clearly that the image of the coordinate chart $\Omega_r$ in $\mathbb{T}^4$ is contained in $B_{g(t)} (p, 2\sqrt{C}r)
		$ since $\tilde f_i \leq C$ ($C>1$ here). As a consequence, $\forall\; r\leq r_0=\frac{1}{2\sqrt{C}}\min\{\frac{1}{2}\underline l, \pi\}$, 
		
		\begin{equation}\label{volume-ratio-lower-bound}
			\begin{split} 
				\frac{\text{Vol}_{g(t)} B_{g(t)}(p, r)}{r^4}
				& \geq \frac{\text{Vol}_{g(t)} \Omega_{\frac{r}{2\sqrt{C}}}}{r^4}\\
				& =\frac{1}{r^4} \int_{\Omega_{\frac{r}{2\sqrt{C}}}} \tilde f_1 \tilde f_2 \tilde f_3\dd y\dd x_1\dd x_2\dd x_3\\
				& = \frac{1}{C^2}
			\end{split}
		\end{equation}

		If the family $g(t)$ has uniform bounded $|\text{Rm}(g(t))|_{g(t)}$, then we could normalize it to be bounded by $1$. For this normalized family, the \emph{volume ratio} is also uniformly bounded from below (because the volume ratio is scaling invariant). By Cheeger-Gromov-Taylor's estimate \cite[Theorem 4.7]{CGT} on the injectivity radius of a Riemannian metric in terms of volume ratio and curvature bound, the family has uniform lower bound on the injectivity radius. If the family does not have a uniform bound on $|\text{Rm}(g(t))|_{g(t)}$, we blow up the family (rescaling $\tilde g(t)=(\sup_{\mathbb{T}^4} |\text{Rm}(g(t))|_{g(t)} ) g(t)$) and keep the uniform lower bound on the \emph{volume ratio} (for all radius $r\leq r_0$ by Equation \eqref{volume-ratio-lower-bound}) because of the scaling invariance of the volume ratio, more precisely $\frac{\text{Vol}_{\lambda^2 g}B_{\lambda^2 g}(p,\lambda r)}{(\lambda r)^n}=\frac{\text{Vol}_g B_g(p, r)}{r^n}$. The same reason as the previous case show us the rescaled family has uniform lower bound on injectivity radius.
	\end{proof}
	
	This subsection shows that the Riemannian metrics along the flow are all equivalent to the standard Euclidean metric on $\mathbb{T}^4$ modulo diffeomorphisms. 
	
	\subsubsection{Scalar curvature increasing}\label{section-scalar-curvature-increasing}
	From Equations (\ref{Tformula}), (\ref{EqnV}) and (\ref{evolution-log-f_i}) we can derive the evolution equation of $\mathcal{T}$ as the following:
	
	\begin{equation}\label{T^2evolution}
		\begin{split}
			\partial_t \mathcal{T} 
			& =
			-\frac{2}{3}\mathcal{T}^2
			+
			\frac{2}{V^2} \sum_{i=1}^3 (\log f_i)' (\frac{\partial }{\partial t}\log f_i)' \\
			& =
			-\frac{2}{3}\mathcal{T}^2
			+
			\frac{2}{V^2} \sum_{i=1}^3 (\log f_i)' \Big\{\frac{1}{V^2} (\log f_i)'' + \frac{1}{V^2} \Big( (\log f_i)' \Big)^2 - \frac{V'}{V^3} (\log f_i)' - \frac{1}{3}\mathcal{T}\Big\}' \\
			& =
			-\frac{2}{3}\mathcal{T}^2
			+
			\frac{2}{V^2} \sum_{i=1}^3 (\log f_i)' \Big\{ \frac{1}{V^2} \Big( (\log f_i)''' +  2(\log f_i)'(\log f_i)''\Big)\\
			& \quad \qquad \qquad  - \frac{2V'}{V^3} \Big((\log f_i)'' + \big( (\log f_i)' \big)^2\Big)
			- (\frac{V'}{V^3})'(\log f_i)' - \frac{V'}{V^3}(\log f_i)'' - \frac{1}{3}(\mathcal{T})' \Big\}\\
			& =
			-\frac{2}{3}\mathcal{T}^2
			+ \frac{2}{V^4} \sum_{i=1}^3 \Big\{ (\log f_i)' (\log f_i)''' +  2\Big( (\log f_i)' \Big)^2(\log f_i)''
			- \frac{3V'}{V}(\log f_i)' (\log f_i)'' \\
			& \quad \qquad\qquad -\frac{2V'}{V} \Big( (\log f_i)' \Big)^3\Big\} 
			- (\frac{V'}{V})'\frac{2\mathcal{T}}{V^2} + 4 (\frac{V'}{V})^2\frac{\mathcal{T}}{V^2}\\
			& =
			-\frac{2}{3}\mathcal{T}^2
			+ \frac{2}{V^4} \Big\{ \sum_{i=1}^3 (\log f_i)' (\log f_i)''' - (\frac{V'}{V})'\mathcal{T}V^2\Big\}
			+  \frac{4}{V^4}\sum_{i=1}^3 \Big( (\log f_i)' \Big)^2(\log f_i)''\\
			& \quad \qquad\qquad - \frac{6V'}{V^5}\sum_{i=1}^3 (\log f_i)' (\log f_i)''  -\frac{4V'}{V^5}\sum_{i=1}^3 \Big((\log f_i)'\Big)^3
			+ 4 (\frac{V'}{V})^2\frac{\mathcal{T}}{V^2}
		\end{split}
	\end{equation}
	
	Observe that there is a big negative term $-\frac{2}{3}\mathcal{T}^2$ in the evolution of $\mathcal{T}$, which is good for our control on $\mathcal{T}$. However, this is followed by a complicated terms whose sign is not clear. Fortunately, we are able to show that at the maximum point of $\mathcal{T}$, the complicated term is not bigger than $\frac{1}{3}\mathcal{T}^2$ (see the lemma below). 
	
	Denote $\overline{\mathcal{T}}(t)=\max_{x\in S^1} \mathcal{T}(x, t)$. The next Lemma is a differential inequality about $\overline{\mathcal{T}}$, which in particular implies it is decreasing. 
	\begin{lemma}\label{lemma-T-decreasing}
		\begin{equation}
			\frac{\dd }{\dd t}\overline{\mathcal{T}}\leq 
			-\frac{1}{3}\overline{\mathcal{T}}^2
		\end{equation}
	\end{lemma}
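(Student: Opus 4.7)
The strategy is a standard parabolic maximum principle applied to the evolution equation \eqref{T^2evolution}. Since $\mathcal{T}(\cdot, t)$ is smooth on the compact circle $S^1$, the maximum is attained at some point $x_0^*(t)$ where $\mathcal{T}' = 0$ and $\mathcal{T}'' \leq 0$ in the spatial variable, and a standard argument (Hamilton's trick) yields $\frac{d}{dt}\overline{\mathcal{T}}(t) \leq \partial_t \mathcal{T}|_{(x_0^*, t)}$. My plan is to combine the two spatial conditions with the constraint $u_1 + u_2 + u_3 = 0$, for $u_i = \log f_i$, coming from $f_1 f_2 f_3 \equiv 1$ (so that $\sum_i u_i^{(k)} = 0$ for every $k \geq 1$), in order to bound the entire remainder following $-\tfrac{2}{3}\mathcal{T}^2$ in \eqref{T^2evolution} by $\tfrac{1}{3}\mathcal{T}^2$ at $x_0^*$.

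First I would read off $\mathcal{T}' = 0$ as the identity $\sum_i u_i' u_i'' = \tfrac{V'}{V}\sum_i (u_i')^2$, and use $\mathcal{T}'' \leq 0$ to deduce
$$
\sum_i u_i' u_i''' \;\leq\; \Big(\tfrac{(V')^2}{V^2} + \tfrac{V''}{V}\Big)\sum_i (u_i')^2 - \sum_i (u_i'')^2.
$$
Plugging these into \eqref{T^2evolution}, I expect all $V''$ terms to cancel against the $-(\tfrac{V'}{V})'\cdot \tfrac{2\mathcal{T}}{V^2}$ piece, and one full batch of $(V')^2$ terms to cancel through the first-order condition applied to $-\tfrac{6V'}{V^5}\sum u_i' u_i''$. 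After these cancellations, only an inequality involving $u_i'$, $u_i''$ and $V'/V$ remains.

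The decisive step is the orthogonal decomposition $u_i'' = \tfrac{V'}{V}u_i' + c_i$. The first-order condition combined with $\sum u_i'' = 0$ forces the two orthogonality relations $\sum c_i = 0$ and $\sum u_i' c_i = 0$. Substituting this decomposition causes further cancellation of the remaining $(V')^2$ and $V' \sum (u_i')^3$ terms, reducing the problem to proving
$$
-2\sum_i c_i^2 + 4\sum_i (u_i')^2 c_i \;\leq\; \tfrac{1}{3}\Big(\sum_i (u_i')^2\Big)^2.
$$
Maximizing the left-hand side over $c_i$ under the two linear constraints by Lagrange multipliers, I expect the inequality to reduce to the purely algebraic statement
$$
\sum_i (u_i')^4 \;\leq\; \tfrac{1}{2}\Big(\sum_i (u_i')^2\Big)^2 + \frac{\big(\sum_i (u_i')^3\big)^2}{\sum_i (u_i')^2}.
$$
The constraint $\sum u_i' = 0$ forces the exact identity $\sum_i (u_i')^4 = \tfrac{1}{2}(\sum_i (u_i')^2)^2$ (a direct symmetric-function computation for three variables summing to zero), so the inequality holds with the nonnegative cubic term to spare.

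The main obstacle is sheer book-keeping: a priori the right-hand side of \eqref{T^2evolution} contains $V''$, $(V')^2$, and higher-order combinations of $u_i'$ and $u_i''$, and it is not at all evident which of these should combine. It is essential to use \emph{both} $\mathcal{T}' = 0$ and $\mathcal{T}'' \leq 0$ (not merely the first-order condition) to kill the $V''$ and $V^{-5}$ terms. The algebraic identity $\sum (u_i')^4 = \tfrac{1}{2}(\sum (u_i')^2)^2$ under $\sum u_i' = 0$ is the key fact that delivers the precise constant $\tfrac{1}{3}$ on the right.
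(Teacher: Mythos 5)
Your proposal is correct and follows a genuinely different route from the paper. The paper substitutes $a=(\log f_1)'$, $b=(\log f_2)'$, $c=-(a+b)$, $A=(\log f_1)''$, $B=(\log f_2)''$, $C=-(A+B)$ directly into the bracketed expression of \eqref{T^2estimate}, reduces the whole thing by brute-force expansion to $3(bA-aB)^2-2(a-b)(2a+b)(a+2b)(bA-aB)$, completes the square in $(bA-aB)$, and then optimizes a rational function in the single variable $x=a/b$ to extract the constant $\tfrac13$. Your decomposition $u_i''=\tfrac{V'}{V}u_i'+c_i$ is more structural: as you observe, writing $\lambda=V'/V$ and $P=\sum u_i'u_i''$, $Q=\sum (u_i'')^2$, $R=\sum (u_i')^2 u_i''$, $S_k=\sum (u_i')^k$, the first-order condition gives exactly $P=\lambda S_2$, so $Q=\lambda^2 S_2+\sum c_i^2$ and $R=\lambda S_3+\sum(u_i')^2 c_i$; substituting into the post-$\mathcal{T}''\leq 0$ bound makes every $\lambda$ term cancel identically, leaving precisely the concave quadratic bound $-2\sum c_i^2+4\sum(u_i')^2 c_i\leq\tfrac13 S_2^2$ under the two linear constraints $\sum c_i=0=\sum u_i' c_i$. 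Maximizing over $c$ by Lagrange multipliers then yields the candidate $c_i=(u_i')^2-\tfrac{S_2}{3}-\tfrac{S_3}{S_2}u_i'$ with maximum value $2(S_4-\tfrac{S_2^2}{3}-\tfrac{S_3^2}{S_2})$, so the desired bound follows from $S_4\leq\tfrac12 S_2^2+\tfrac{S_3^2}{S_2}$, which is trivial since $S_1=0$ forces the exact identity $S_4=\tfrac12 S_2^2$ and $S_3^2/S_2\geq 0$. This version makes the source of the constant $\tfrac13$ transparent (it is exactly $2(\tfrac12-\tfrac13)$ coming from the Newton-type identity $S_4=\tfrac12 S_2^2$), reveals that the paper's inequality is sharp only when $S_3=0$, i.e.\ when some two of the $u_i'$ coincide, and avoids the one-variable optimization at the end; the paper's calculation, while less conceptual, has the virtue of exhibiting the extra nonnegative square $3\{(bA-aB)-\tfrac13(a-b)(2a+b)(a+2b)\}^2$ explicitly. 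Both are complete proofs.
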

	\begin{proof}
		By the defining formula \eqref{Tformula}, the first and second derivatives of $\mathcal{T}$ are: 
		
		\begin{equation} 
			\begin{split} 
				\mathcal{T}' 
				&= \frac{2}{V^2} \sum_{i=1}^3 (\log f_i)' (\log f_i)''- \frac{2V'}{V}
				\mathcal{T}\\
				\mathcal{T} ''
				& = \frac{2}{V^2} \sum_{i=1}^3 (\log f_i)' (\log f_i)''' + \frac{2}{V^2}\sum_{i=1}^3 \Big((\log f_i)''\Big)^2 - \frac{4V'}{V^3}\sum_{i=1}^3 (\log f_i)' (\log f_i)'' \\
				& \quad\qquad -2 (\frac{V'}{V})' \mathcal{T} - \frac{2V'}{V} \mathcal{T} '
			\end{split}
		\end{equation}
		At the maximum point $(p,t)\in S^1\times [0, t_0)$ of $\mathcal{T}$ (restricted to $S^1\times \{t\}$) we have 
		\[
		\mathcal{T}'=0\;\;, \;\; \mathcal{T}''\leq 0
		\]
		and therefore it holds that
		\begin{equation}\label{FirstDerivative}
			\frac{V'}{V}
			=\frac{1}{\overline{\mathcal{T}}V^2}\sum_{i=1}^3 (\log f_i)'(\log f_i)''
		\end{equation}
		and
		\begin{equation} \label{SecondDerivative}
			\begin{split}
				\sum_{i=1}^3 & (\log f_i)'(\log f_i)''' - (\frac{V'}{V})'\overline{\mathcal{T}}V^2 \\
				&  \leq -\sum_{i=1}^3 \Big((\log f_i)''\Big)^2 + \frac{2V'}{V} \sum_{i=1}^3 (\log f_i)'(\log f_i)''\\
				& = -\sum_{i=1}^3 \Big((\log f_i)''\Big)^2 + \frac{2}{\overline{\mathcal{T}}V^2} \Big\{\sum_{i=1}^3 (\log f_i)'(\log f_i)''\Big\}^2
			\end{split}
		\end{equation}

		Substituting the inequality \eqref{SecondDerivative} into the term $\left\{, \right\}$ in Equation \eqref{T^2evolution} and replacing $\frac{V'}{V}$ in \eqref{T^2evolution} by the RHS of Equation \eqref{FirstDerivative}, we get the differential inequality for $\overline{\mathcal{T}}$ as:

		\begin{equation}\label{T^2estimate}
			\begin{split}
				\frac{\dd}{\dd t}\overline{\mathcal{T}}  
				\leq &  -\frac{2}{3}\overline{\mathcal{T}}^2 - \frac{2}{V^4}\sum_{i=1}^3 \Big((\log f_i)''\Big)^2 
				+ \frac{2}{\overline{\mathcal{T}} V^6}\Big\{\sum_{i=1}^3 (\log f_i)'(\log f_i)''\Big\}^2 \\
				& + \frac{4}{V^4}\sum_{i=1}^3 \Big((\log f_i)'\Big)^2(\log f_i)''
				- \frac{4}{\overline{\mathcal{T}} V^6} \sum_{i=1}^3 \Big((\log f_i)'\Big)^3\sum_{j=1}^3 (\log f_j)'(\log f_j)''\\
				= & -\frac{2}{3}\overline{\mathcal{T}}^2
				- \frac{2}{\overline{\mathcal{T}}V^6}\Big\{\overline{\mathcal{T}}V^2\sum_{i=1}^3 \Big((\log f_i)''\Big)^2 
				- \Big(\sum_{i=1}^3 (\log f_i)'(\log f_i)''\Big)^2 \\
				& - 2 \overline{\mathcal{T}}V^2\sum_{i=1}^3 \Big((\log f_i)'\Big)^2(\log f_i)''
				+ 2\sum_{i=1}^3 \Big((\log f_i)'\Big)^3\sum_{j=1}^3 (\log f_j)'(\log f_j)''\Big\}
			\end{split}
		\end{equation}
		To simplify the notation, denote the values at $p$ of the various functions:
		\begin{align*} 
			(\log f_1)' & =a, (\log f_2)'  =b, (\log f_3)'   =c= -(a+b)\\
			(\log f_1)''& =A, (\log f_2)'' =B,  (\log f_3)''  =C= -(A+B)
		\end{align*}
		Then the term in $\Big\{,\Big\}$ of the RHS in the above inequality (\ref{T^2estimate}) is 
		\begin{equation}
			\begin{split}
				(a^2+  b^2& +c^2)(A^2+B^2+C^2) 
				- (aA+bB+cC)^2\\
				& \qquad - 2(a^2+b^2+c^2)(a^2A+b^2B+c^2C) + 2(a^3+b^3+c^3)(aA+bB+cC)\\
				& 
				= 3 (bA-aB)^2 - 2 (a-b)(2a+b)(a+2b)(bA-aB)\\
				& 
				= 3 \left\{ (bA-aB) - \frac{1}{3}(a-b)(2a+b)(a+2b)\right\}^2 - \frac{1}{3}(a-b)^2(2a+b)^2(a+2b)^2
			\end{split}
		\end{equation} 
		Inserting it into the inequality (\ref{T^2estimate}) gives
		
		\begin{equation}
			\begin{split}
				\frac{\dd}{\dd t}\overline{\mathcal{T}}
				&\leq -\frac{2}{3}\overline{\mathcal{T}}^2 
				- \frac{6}{\overline{\mathcal{T}}V^6} \left\{ (bA-aB) - \frac{1}{3}(a-b)(2a+b)(a+2b)\right\}^2\\
				& \qquad\qquad\qquad\qquad\quad\quad+ \frac{2}{3\overline{\mathcal{T}}V^6}(a-b)^2(2a+b)^2(a+2b)^2\\
				& \leq -\frac{2}{3\overline{\mathcal{T}}V^6}\left\{ (a^2+b^2+c^2)^3 - (a-b)^2(2a+b)^2(a+2b)^2\right\}\\
				&=  -\frac{2}{3}\frac{8 (a^2+b^2 +ab)^3 - (a-b)^2(2a+b)^2(a+2b)^2}{8 (a^2+ab+b^2)^3}\overline{\mathcal{T}}^2\\
				& =_{x=\frac{a}{b}} -\frac{2}{3} \left\{ \frac{1}{2}+ \frac{27}{8(x^2+x+1)}(1-\frac{1}{x^2+x+1})^2\right\}\overline{\mathcal{T}}^2\\
				& \leq -\frac{1}{3}\overline{\mathcal{T}}^2
			\end{split}
		\end{equation}
	\end{proof}
	
	The Lemma \ref{lemma-T-decreasing} implies the following \emph{a priori estimate} about torsion tensor (or equivalently, the scalar curvature by Formula \eqref{Scalar-curvature}):
	
	\begin{proposition}[{torsion-scalar curvature estimate}]\label{torsion-tensor-estimate}
		Suppose the hypersymplectic flow of simple type on $\mathbb{T}^4$ exists on $[0, t_0)$, then the torsion tensor is uniformly bounded on $[0, t_0)$. More precisely, let $\mathcal{T}_0=\max_{p\in \mathbb{T}^4} \mathcal{T}(p, 0)$, then the following decaying estimate holds:  
		\[
		\max_{p\in \mathbb{T}^4} \mathcal{T}(p, t)\leq 
		\frac{\mathcal{T}_0}{1+\frac{1}{3}\mathcal{T}_0 t}, \;\; \forall t\in [0, t_0). 
		\]
		The minimum of the scalar curvature is increasing, and the maximum of the scalar curvature is always non-positive. 
	\end{proposition}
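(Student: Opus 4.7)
The proposition is essentially an ODE comparison consequence of Lemma~\ref{lemma-T-decreasing}. First I would note that $\mathcal{T}=V^{-2}\sum_{i=1}^3\big((\log f_i)'\big)^2$ depends only on $x_0$, so $\max_{p\in\mathbb{T}^4}\mathcal{T}(p,t)=\max_{x\in S^1}\mathcal{T}(x,t)=\overline{\mathcal{T}}(t)$; there is no loss in passing between the two maxima. The function $\overline{\mathcal{T}}$, as the max of a smooth one-parameter family on a compact manifold, is locally Lipschitz in $t$, and the argument in Lemma~\ref{lemma-T-decreasing} actually gives the required upper Dini-derivative bound at any spatial maximum point, which is all one needs to run a Gronwall/ODE-comparison (this is the standard Hamilton-style trick for max functions along geometric flows).

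Second, I would integrate the resulting inequality. On any sub-interval where $\overline{\mathcal{T}}>0$, the differential inequality $\tfrac{d}{dt}\overline{\mathcal{T}}\leq -\tfrac13\overline{\mathcal{T}}^2$ is equivalent (after dividing by $-\overline{\mathcal{T}}^2$) to
\[
\frac{d}{dt}\Big(\frac{1}{\overline{\mathcal{T}}}\Big)\geq \frac{1}{3},
\]
and integrating from $0$ to $t$ yields $\overline{\mathcal{T}}(t)^{-1}\geq \mathcal{T}_0^{-1}+t/3$, i.e.\ the desired decay
\[
\overline{\mathcal{T}}(t)\leq \frac{\mathcal{T}_0}{1+\tfrac13\mathcal{T}_0\,t}.
\]
The edge cases are harmless: if $\mathcal{T}_0=0$ then $\overline{\mathcal{T}}\equiv 0$ and the inequality is trivial, and if $\overline{\mathcal{T}}$ vanishes at some intermediate time it remains zero afterwards (by monotonicity and the inequality), again consistent with the stated bound.

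Third, I would read off the scalar curvature statements from the identity $R=-\tfrac12\mathcal{T}$ in~\eqref{Scalar-curvature}. Since $\mathcal{T}$ is a sum of squares divided by $V^2>0$, one has $\mathcal{T}\geq 0$ pointwise, whence $R\leq 0$ everywhere and in particular $\max_{\mathbb{T}^4} R(\cdot,t)\leq 0$ for all $t$. On the other hand $\min_{\mathbb{T}^4} R(\cdot,t)=-\tfrac12\overline{\mathcal{T}}(t)$, and the monotonic decay of $\overline{\mathcal{T}}$ from the previous step is exactly the assertion that $\min_{\mathbb{T}^4} R$ is non-decreasing in $t$.

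The only genuinely nontrivial input is Lemma~\ref{lemma-T-decreasing} itself; given that lemma, the main (minor) obstacle here is the regularity issue of differentiating $\overline{\mathcal{T}}(t)$, and this is handled in the usual way by replacing the classical derivative with the upper Dini derivative (or equivalently by applying the ODE comparison to $1/\overline{\mathcal{T}}$ which is also locally Lipschitz). Everything else is direct computation.
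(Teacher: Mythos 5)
Your proposal is correct and matches the paper's (implicit) argument: the paper states Proposition~\ref{torsion-tensor-estimate} as a direct consequence of Lemma~\ref{lemma-T-decreasing} and Formula~\eqref{Scalar-curvature} without writing out the ODE integration, and you have simply filled in the standard details (Dini-derivative regularity at the spatial maximum, integration of $\tfrac{d}{dt}(1/\overline{\mathcal{T}})\geq 1/3$, the edge case $\overline{\mathcal{T}}=0$, and the translation to scalar curvature via $R=-\tfrac12\mathcal{T}\leq 0$).
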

	It is interesting to note that Lauret obtained similar bound for homogeneous $G_2$-Laplacian flow solution \cite[Proposition 5.21]{L1}. 
	
	
	\subsection{Long time existence}\label{section-long-time-existence}
	\begin{theorem}[(Long time existence)]\label{long-time-existence}
		Initiated from any smooth hypersymplectic structure of simple type on $\mathbb{T}^4$, the hypersymplectic flow
		exists on $[0, \infty)$.
	\end{theorem}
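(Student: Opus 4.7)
The plan is to argue by contradiction via a parabolic blow-up: assuming a finite maximal existence time $T$, I rescale the flow near $T$ and use the scalar-curvature decay of Proposition \ref{torsion-tensor-estimate} to force the blow-up limit to be flat, contradicting its construction.

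Suppose $T<\infty$. By Proposition \ref{prop short time existence}, the flow extends past any $t^{*}<T$ once $f_i(\cdot,t^{*})\in C^{2,\alpha}$ and $V(\cdot,t^{*})\in C^{1,\alpha}$ with uniform positive lower bounds. The quasi-isometry \eqref{f_ibound} gives $C^{-1}\le f_i\le C$; integrating \eqref{EqnV} against Proposition \ref{torsion-tensor-estimate} yields
$V(\cdot,0)\le V(\cdot,t)\le V(\cdot,0)\bigl(1+\tfrac{1}{3}\mathcal T_0 t\bigr)$, so $V$ is uniformly bounded above and below on $[0,T)$. Since $\mathcal T=V^{-2}\sum((\log f_i)')^2$ is uniformly bounded by Proposition \ref{torsion-tensor-estimate}, $f_i'$ is bounded as well. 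The remaining second-order bounds on $f_i$ and first-order bound on $V$ are, through the curvature formulas \eqref{Riemannian-curvature}--\eqref{Ricci-curvature}, equivalent to a uniform bound on $|\Rm|_{g(t)}$. Hence the flow can only fail to extend past $T$ if
\[
\sup_{\mathbb T^4\times[0,t]}|\Rm|_{g(t)}\ \longrightarrow\ \infty \qquad\text{as}\qquad t\nearrow T.
\]

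Via Hamilton-type point-picking I then choose $(p_k,t_k)$ with $t_k\nearrow T$ and $Q_k:=|\Rm|_{g(t_k)}(p_k)\to\infty$ essentially realizing $\sup_{\mathbb T^4\times[0,t_k]}|\Rm|$, and rescale parabolically
\[
\tilde g_k(s):=Q_k\,g\bigl(t_k+s/Q_k\bigr),\qquad \tilde\omega_i^{(k)}(s):=Q_k\,\omega_i\bigl(t_k+s/Q_k\bigr),
\]
so that $(\tilde\omega_1^{(k)},\tilde\omega_2^{(k)},\tilde\omega_3^{(k)})$ is again a hypersymplectic flow of simple type on an enlarged torus, satisfying $|\tilde{\Rm}_k|_{\tilde g_k}\le 1$ on the past portion of its time interval and $|\tilde{\Rm}_k|_{\tilde g_k}(p_k,0)=1$. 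Proposition \ref{injectivity} supplies a uniform positive lower bound on the injectivity radii at $p_k$. Combining bounded rescaled curvature with non-collapsing and a Cheeger-Gromov-type compactness for hypersymplectic flows (either through the $G_2$-Laplacian flow compactness of \cite{LW1}, or directly by differentiating \eqref{degenerate-system} and bootstrapping uniform $C^{k,\alpha}$ estimates on $\tilde f_i,\tilde V$ from the rescaled curvature bound), I extract a pointed smooth subsequential limit hypersymplectic flow $(X^4_\infty,\tilde g_\infty(s),p_\infty)$ of cohomogeneity-one simple type with $|\tilde{\Rm}_\infty|(p_\infty,0)=1$. To close the contradiction, under the rescaling $\tilde R_k=Q_k^{-1}R_{g(t_k+s/Q_k)}$, while Proposition \ref{torsion-tensor-estimate} and \eqref{Scalar-curvature} give $|R_g|\le \mathcal T_0/2$ uniformly; hence $\tilde R_k\to 0$ smoothly and $\tilde R_\infty\equiv 0$ on the limit. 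In the cohomogeneity-one simple type setting \eqref{Scalar-curvature} then yields $\tilde{\mathcal T}_\infty\equiv 0$, forcing the limit warping functions $\tilde f_i^\infty$ to be constant; substituting into \eqref{Riemannian-curvature} gives $\tilde{\Rm}_\infty\equiv 0$, contradicting $|\tilde{\Rm}_\infty|(p_\infty,0)=1$. Therefore $T=\infty$.

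The hardest part will be making the Cheeger-Gromov-type compactness for rescaled hypersymplectic structures fully rigorous: one must ensure the limit inherits both the flow equation and the simple-type cohomogeneity-one ansatz, and that scalar curvature converges smoothly so that $\tilde R_\infty\equiv 0$ is legitimate. In the present warped-product setting this reduces to uniform $C^{k,\alpha}$ bounds on $\tilde f_i,\tilde V$ once $|\tilde{\Rm}_k|$ is bounded, which can be obtained by repeated differentiation of \eqref{degenerate-system} and parabolic bootstrapping; the remainder of the argument is then standard blow-up technology.
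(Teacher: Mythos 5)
Your proposal is correct in its overall strategy and reaches the same conclusion, but it replaces the paper's final step with a genuinely different mechanism. Both you and the paper reduce to a blow-up argument using Proposition \ref{injectivity} (non-collapsing) and Proposition \ref{torsion-tensor-estimate} (the torsion/scalar-curvature decay implies the rescaled torsion $\tilde{\mathcal T}_k=\mathcal T/Q_k\to 0$), and both derive a contradiction from a flat limit having $|\widetilde{\Rm}|(p_\infty)=1$. The divergence is in how the limit is shown to be flat. The paper invokes the hypersymplectic compactness of \cite{FY} to get a complete \emph{hyperK\"ahler} limit (with no structural information beyond that), then observes that the meridian geodesic arcs through $p_k$ have $g(t_k)$-length bounded below by Proposition \ref{total-volume-estimate}, so after rescaling they converge to a geodesic line in the (Ricci-flat) limit; the Cheeger--Gromoll splitting theorem then forces flatness. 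You instead argue that the limit inherits the cohomogeneity-one warped-product ansatz, in which case $\widetilde R_\infty\equiv 0$ plus \eqref{Scalar-curvature} forces $\widetilde{\mathcal T}_\infty\equiv 0$, hence constant warping functions, hence flatness by \eqref{Riemannian-curvature}. Your route avoids the splitting theorem and is more elementary in that respect, but it pushes the burden onto the compactness step: you must ensure the ansatz (including $f_1f_2f_3\equiv 1$) survives the Cheeger--Gromov limit, and you must have smooth ($C^2$ at least) convergence so that $\widetilde f_i^{\infty\,\prime\prime}=0$, not merely $\widetilde f_i^{\infty\,\prime}=0$. You correctly flag this as the main difficulty; in the explicit warped-product coordinates (working in the intrinsic $y$-coordinate of \eqref{first-warped-metric} and rescaling translations as the comparison diffeomorphisms) it can indeed be made rigorous, but this is precisely the work the paper sidesteps by passing through the general hyperK\"ahler limit plus the geodesic-line splitting argument. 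In short: same contradiction, different closing lemma, with a trade-off between conceptual overhead (splitting theorem) and structural bookkeeping (ansatz preservation in the limit).
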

	
	\begin{proof}
		First, we show that if the flow (\ref{definite-triple-flow-of-simple-type}) exists on $[0, t_0) $ with $t_0<\infty$ and $\mathcal{T}$ is uniformly bounded, then the flow could be extended across $t_0$.
		
		Proposition \ref{injectivity} shows that the family of metrics $g(t)$ is uniform noncollapsing on any scale smaller than the fixed scale $r_0$. Suppose the flow is not extendible across $t_0$, we get that the uniform uppper bound of $\Lambda_{\phi} = \sup_{\mathbb{T}^4\times \mathbb{T}^3}(|\text{Rm}|_{g_\phi}^2+|\nabla T|_{g_\phi}^2)^\frac{1}{2}$ for $g_{\phi(t)} = g(t)\oplus g_Q=g(t)\oplus Q_{ij}\dd t_i\dd t_j$ on 
		$\mathbb{T}^7=\mathbb{T}^4\times \mathbb{T}^3$ does not exist for $t\in [0, t_0)$ by \cite[Theorem 5.1]{LW1} (where the anti-symmetric $2$-tensor $T$ is $-2$ mutiple of the intrinsic torsion $2$-form $\bm{\tau}$ in Equation \eqref{intrinsic-torsion}). Then there exists a sequence $g^i=g(t_i)$ with $t_i\to t_0$ such that
		\[
		\Lambda^i = \Lambda_{{\phi(t_i)}} = \sup_{\mathbb{T}^4\times \mathbb{T}^3}(|\text{Rm}|_{g_\phi}^2+|\nabla T|_{g_\phi}^2)^\frac{1}{2}(\cdot, t_i)\to \infty,
		\]
		and suppose this upper bound is attained at $p_i$. 
		Let $\tilde g^i =\Lambda^i g^i$ and $\tilde{\underline{\omega}}^i=\Lambda^i \underline\omega^i$ and similarly $\tilde\phi^i=\Lambda^i \phi(t_i)$, then $\tilde g^i$ has uniform lower bound on the injectivity radius by Proposition \ref{injectivity} and uniform lower bound 
		on the volume ratio on all scales, and moreover $\Lambda_{\tilde\phi^i}$ is uniformly bounded. The same argument as in the proof of \cite[Theorem 5.1]{FY} shows that
		we can take a Cheeger-Gromov limit 
		\[
		(\mathbb{T}^4, \tilde g^i , \tilde \omega^i)\longrightarrow (X^\infty, \tilde g^\infty, \tilde \omega^\infty), 
		\]
		where $\tilde\omega^\infty$ and $\tilde g^\infty$ defines a complete hyperK\"ahler structure on $X^\infty$. 
		Let $\gamma_i$ be the meridian geodesic segment connecting $q_i$ and $q_i'$ whose length is \emph{half}  of the length of the meridian circle they lie on such that $p_i$ is the distance midpoint of $\gamma_i$. We know from Proposition \ref{total-volume-estimate} that $\gamma_i$ is a minimizing geodesic for $g^i$, and its length is bounded from below. Under the rescaled metric $\tilde g^i$, this gives a sequence of minimizing geodesic $\tilde \gamma_i$ whose length tends to infinity. The limit $\tilde\gamma_\infty$ will become a geodesic line on 
		$(X^\infty, \tilde g^\infty)$.  
		
		By the \emph{Cheeger-Gromoll splitting theorem}, we know that $(X^\infty, \tilde g^\infty)$ must be flat. By the smooth convergence, we have 
		\[
		|\text{Rm}({g^\infty})|_{g^\infty}(p_\infty)=1,
		\]
		a contradiction.

		By Proposition \ref{torsion-tensor-estimate}, we know $\mathcal{T}$ is uniformly bounded at any finite time, thus the theorem is proved.
	\end{proof}

	\subsection{Convergence}\label{section-convergence}
	\begin{theorem}[(Convergence)]\label{convergence}
		Initiated from any smooth hypersymplectic structure of simple type on $\mathbb{T}^4$, the hypersymplectic flow existing on $[0, \infty)$
		converges smoothly to the standard hyperK\"ahler structure  $\underline\omega^0=(\omega^0_1, \omega^0_2, \omega^0_3)$ on $\mathbb{T}^4$ modulo diffeomorphisms.  More precisely, there exists a family of orientation preserving diffeomorphisms $F_t$ of $\mathbb{T}^4$, all fixing the $\mathbb{T}^3$ factor of $\mathbb{T}^4=S^1\times \mathbb{T}^3$ pointwisely , such that 
		\[
		F_t^*\underline{\omega}(t)\xrightarrow{C^\infty} _{t\to \infty}\underline{\omega}^0
		\]
	\end{theorem}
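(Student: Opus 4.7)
The plan combines the torsion decay $\mathcal{T}\le C/(1+t)$ from Proposition \ref{torsion-tensor-estimate}, uniform higher regularity along the flow, and an explicit time-dependent $S^1$-reparametrization built from $A_1$. First, I would prove uniform convergence $f_i\to 1$ via the arclength coordinate $y=y(x_0,t)=\int_0^{x_0}V(s,t)\,ds$, so $y\in[0,L(t)]$ with $L(t)=\int_0^{2\pi}V\,dx_0$. AM-GM on $V=(A_1A_2A_3)^{1/3}$ combined with the preserved normalization $\int A_i\,dx_0=2\pi$ gives $L(t)\le 2\pi$, and \eqref{EqnV} shows $L(t)$ is nondecreasing, so $L(t)\uparrow L_\infty\le 2\pi$. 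Writing $\tilde f_i(y,t)=f_i(x_0(y,t),t)$, the chain rule yields $\sum_i\bigl((\log\tilde f_i)_y\bigr)^2=\mathcal{T}$, hence $|(\log\tilde f_i)_y|\le\sqrt{\mathcal{T}}\le C/\sqrt{1+t}$. Since $y$ ranges over an interval of length at most $2\pi$, the oscillation of $\tilde f_i$ tends to zero; the relation $\int_0^{L(t)}\tilde f_i\,dy=2\pi$ pins the spatial mean of $\tilde f_i$ to $2\pi/L(t)$, so $\tilde f_i-2\pi/L(t)\to 0$ uniformly. The pointwise identity $\tilde f_1\tilde f_2\tilde f_3\equiv 1$ then forces $(2\pi/L_\infty)^3=1$, hence $L_\infty=2\pi$ and $\tilde f_i\to 1$ uniformly.

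To upgrade this to smooth convergence, I would invoke Shi-type estimates for the $G_2$-Laplacian flow \cite{LW1}: the torsion bound $\mathcal{T}\le \mathcal{T}_0$ together with the non-collapsing of Proposition \ref{injectivity} yields uniform $C^k$ bounds on the intrinsic geometry of $(\mathbb{T}^7,g_{\phi(t)})$ for every $k$, which under the simple-type/$\mathbb{T}^3$-invariant ansatz become uniform $C^k$ bounds on $\tilde f_i$ and $A_i$ in the arclength variable $y$. An Ehrling interpolation between the $C^0$ decay above and the $C^{k+1}$ boundedness yields $\tilde f_i\to 1$ in $C^k(S^1)$ for every $k$.

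Finally, I would construct $F_t$ explicitly. Define $\Psi_t:S^1\to S^1$ implicitly by $\int_0^{\Psi_t(\xi_0)}A_1(x_0,t)\,dx_0=\xi_0$; this is a smooth orientation-preserving diffeomorphism of $S^1$ since $A_1>0$ and $\int A_1\,dx_0=2\pi$. Set $F_t(\xi_0,\xi_1,\xi_2,\xi_3)=(\Psi_t(\xi_0),\xi_1,\xi_2,\xi_3)$, which fixes the $\mathbb{T}^3$-factor pointwise. Differentiating the defining relation gives $\Psi_t'(\xi_0)=A_1(\Psi_t(\xi_0),t)^{-1}$, whence by direct pullback $F_t^*\omega_1(t)=\omega_1^0$ identically and, for $i=2,3$,
\begin{equation*}
F_t^*\omega_i(t)=\frac{f_i}{f_1}\bigl(\Psi_t(\xi_0),t\bigr)\,d\xi_0\wedge d\xi_i+d\xi_j\wedge d\xi_k.
\end{equation*}
Since $f_i/f_1\to 1$ in $C^k$ and $\Psi_t$ is uniformly $C^k$-bounded through $A_1$, the composition $(f_i/f_1)\circ\Psi_t\to 1$ in $C^k$ for every $k$, and $F_t^*\underline\omega(t)\to\underline\omega^0$ in $C^\infty$.

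The main obstacle is the higher-regularity step: propagating the intrinsic Shi-type bounds cleanly to uniform $C^k$ control of the \emph{coordinate} quantities $A_1$ and $\tilde f_i$, since the warping factor $V$ need not be uniformly bounded in the $x_0$-coordinate along the flow. The cleanest route is to perform all regularity estimates in the arclength frame $y$, where the intrinsic geometry is uniformly bounded, and translate back through $dy=V\,dx_0$ only at the last step in order to check that the reparametrization $\Psi_t$ built from $A_1$ remains uniformly smooth.
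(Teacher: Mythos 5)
Your proof is correct and takes a genuinely different route in several places, even though the overall skeleton (establish $C^0$ convergence of $\tilde f_i$, upgrade via Shi-type estimates, pull back by an $S^1$-reparametrization) matches the paper's.

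The main differences, and what each buys: \textbf{(i)} To pin the $C^0$ limit of $\tilde f_i$ you use AM--GM ($V\le\tfrac13\sum A_i$ gives $L(t)\le 2\pi$), the preserved normalization $\int_0^{L(t)}\tilde f_i\,dy=\int_{S^1}A_i\,dx_0=2\pi$, and the algebraic identity $\tilde f_1\tilde f_2\tilde f_3\equiv 1$ to force $L_\infty=2\pi$ and $\tilde f_i\to 1$. This is more elementary and direct than the paper's route, which extracts a subsequential limit via Arzel\`a--Ascoli, identifies it as a constant triple $c_i$ using $\widehat{\mathcal T}\to 0$, and then pins $c_i=1$ by comparing $\int\widehat\omega_i\wedge\widehat\omega_i$ with the fixed cohomological quantity $[\omega_i(0)]^2[\mathbb{T}^4]$. \textbf{(ii)} You construct $F_t$ from $A_1$ (the $\omega_1$-moment-map coordinate) rather than from $V$ (the arclength coordinate) as the paper does; your choice gives $F_t^*\omega_1\equiv\omega_1^0$ exactly, which is aesthetically pleasing, whereas the paper's choice makes the pulled-back metric essentially arclength-normalized so that the curvature bounds directly control $\partial_y^k\hat f_i$. \textbf{(iii)} You upgrade $C^0\to C^k$ by Ehrling interpolation against uniform $C^{k+1}$ bounds, whereas the paper reruns Arzel\`a--Ascoli at each order and invokes uniqueness of subsequential limits. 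Both work.

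Two points you should tighten. First, ``the torsion bound together with the non-collapsing of Proposition \ref{injectivity} yields uniform $C^k$ bounds'' via \cite{LW1} is too quick: the Shi-type estimates of \cite{LW1} are driven by a uniform bound on $\Lambda_\phi=\sup(|\Rm|^2+|\nabla T|^2)^{1/2}$, and you still need to establish $\sup_{[0,\infty)}\Lambda_\phi<\infty$ from $\mathcal T\le\mathcal T_0$. This requires rerunning the blow-up/Cheeger--Gromoll splitting argument from the long-time existence proof over the infinite time interval, which the paper explicitly invokes. Second, as you correctly flag, the intrinsic $C^k$ bounds do not directly control $x_0$-derivatives of $A_1$ (since $V$ need not be bounded in $C^k$ in $x_0$), so the uniform smoothness of $\Psi_t$ must be read off in the $y$-frame: in that frame your reparametrization becomes $\xi_0=\int_0^y\tilde f_1\,dy'$, which is uniformly $C^{k+1}$ with derivative $\tilde f_1$ bounded above and below, and the composition $(f_i/f_1)\circ\Psi_t=(\tilde f_i/\tilde f_1)\circ\Xi_t^{-1}$ is then $C^k$-controlled. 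This resolves the ``main obstacle'' you identify, but the step should be written out rather than left as a remark.
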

	
	\begin{proof}
		We use the fact that $\mathcal{T}$ is uniformly bounded on $[0, \infty)$. Because Proposition \ref{injectivity} hold independently of the maximal existing time $t_0$,  
		exactly the same proof as the above \emph{long time existence} shows that 
		for the closed $G_2$-structure $\phi(t)=\dd t^1\wedge \dd t^2\wedge\dd t^3 - \dd t^1\wedge\omega_1(t) - \dd t^2\wedge\omega_2(t) - \dd t^3\wedge\omega_3(t)$ on $\mathbb{T}^7$, 
		\[
		\sup_{\mathbb{T}^4\times [0, \infty)} \Big( |\text{Rm}|_{g}^2 + |\nabla T|_{g}^2\Big)\leq C_0
		\]
		for some uniform constant $C_0>0$. Thus Proposition \ref{injectivity} gives us a uniform lower bound on the injectivity radius. By \emph{Shi-type estimate} \cite{LW1}, there exists $C_k>0$  independent of $t\in [0, \infty)$ for all $k\in \mathbb{N}$ such that
		\[
		|\nabla^k \text{Rm}|_{g}^2 + |\nabla^{k+1} T|_{g}^2\leq C_k
		\]
		for any $k\geq 1$. 
		
		These obtained bounds do not contain information of higher derivatives of $f_i$ and $V$ separatedly, and thus it is hard to conclude the convergence of these functions. We are going to use diffeomorphisms to pull back the hypersymplectic structures such that $V$ becomes $1$ and the derivatives bound of Riemannian curvatures become the derivatives bound of the new warped functions. 
		
		Let $v_t=\int_{S^1}V(\theta, t)\dd \theta$, then $v_t$ is increasing according to $t$ since the total volume is increasing, moreover it is bounded from above by Proposition \ref{total-volume-estimate}. Denote the limit constant by $v_\infty$. Viewing $V(\cdot, t)$ as a periodic function (with period $2\pi$) naturally,  we could define a diffeomorphism 
		\begin{equation*} 
			\begin{split} 
				G_t: \mathbb{R} & \longrightarrow\mathbb{R}\\
				x_0 & \mapsto \frac{2\pi}{v_t}\int_0^{x_0} V(x', t)\dd x'
			\end{split}
		\end{equation*}
		This map is an orientation-preserving diffeomorphism because $V$ is positive and smooth. It is periodic with period $2\pi$ and thus descends to a diffeomorphism (still denoted by): 
		\begin{equation*} 
			\begin{split} 
				G_t: S^1=\mathbb{R}/2\pi \mathbb{Z} & \longrightarrow S^1= \mathbb{R}/2\pi \mathbb{Z}\\
				x_0\;\; (\mod 2\pi) & \mapsto G_t(x_0) \;\; (\mod 2\pi)
			\end{split}
		\end{equation*}
		And this extends to an orientation-preserving diffeomorphism (still denoted by): 
		\begin{equation}
			\begin{split} 
				G_t : \mathbb{T}^4 & \longrightarrow \mathbb{T}^4\\
				(\theta, x_1, x_2, x_3) &\mapsto (G_t(\theta), x_1, x_2, x_3)
			\end{split}
		\end{equation}
		We write the new function
		\begin{equation*} 
			\begin{split} 
				\hat f_i\; : \;\; S^1\times [0, t_0) 
				& \longrightarrow \mathbb{R}^+\\
				(\theta, t)  
				& \mapsto f_i(G_t^{-1}(\theta), t)
			\end{split}
		\end{equation*}
		Because $\omega_i(t)=f_i(x_0, t) V(x_0, t)\dd x_0\wedge \dd x_i + \frac{1}{2}\epsilon_{ijk}\dd x_j\wedge \dd x_k$, we have the push-forward hypersymplectic structure
		\begin{equation}
			\begin{split} 
				\widehat{\omega}_i(t) 
				& := (G_t^{-1})^*\omega_i(t) \\
				& = f_i(G_t^{-1}(y), t)\frac{v_t}{2\pi}\dd y\wedge \dd x_i + \frac{1}{2}\epsilon_{ijk}\dd x_j\wedge\dd x_k \\
				& =\frac{v_t}{2\pi} \hat f_i(y, t)\dd y\wedge \dd x_i + \frac{1}{2}\epsilon_{ijk}\dd x_j\wedge\dd x_k 
			\end{split}
		\end{equation}
		The corresponding Riemannian metric is $\big(\mathbb{T}^4, \widehat g(t)=(G_t^{-1})^*g(t) \big)= S^1\times_{\hat f_1} S^1\times_{\hat f_2} S^1 \times_{\hat f_3} S^1$. The previous bounds we obtained are in nicer forms in the $y$-coordinate, under which we are going to show that the flow converges. Concretely, we have
		\begin{equation}\label{variousbounds}
			\begin{split} 
				& \frac{1}{C} \leq \hat f_i \leq C\\ 
				&\widehat{\mathcal{T}}
				=\sum_{i=1}^3 \frac{1}{\hat f_i^2} (\frac{\partial \hat f_i}{\partial y})^2 \xrightarrow{C^0}_{t\to \infty} 0\\
				& |\widehat{\nabla}^k \widehat{\Rm}|_{\hat g}^2 
				\leq C_k,\;\; \forall\;\; k=0, 1, 2, \cdots
			\end{split}
		\end{equation}
		
		According to Equation \eqref{Riemannian-curvature} of Riemannian curvature for the metric $\widehat{g}(t)$,
		\[
		|\widehat{\Rm}|_{\hat g}^2
		= \sum_{i=1}^3 \left|\frac{1}{2\hat f_i} \frac{\partial^2 \hat f_i}{\partial y^2} - \frac{1}{4 \hat f_i^2} (\frac{\partial \hat f_i}{\partial y})^2\right|^2
		\]
		and therefore the formulas in \eqref{variousbounds} imply $\left|\frac{\partial^2\hat f_i}{\partial y^2}\right|$ is uniformly bounded. Similarly, for $k\in \mathbb{N}^+$, $|\widehat{\nabla}^k\widehat{\Rm}|_{\hat g}^2$ is the sum of
		$\left|\frac{1}{2\hat f_i} \frac{\partial^{k+2} \hat f_i}{\partial y^{k+2}}\right|^2$ and terms involving $\hat f_i$ and lower order derivatives of $\hat f_i$. Inductively, we know that the $k$-th ($k\geq 1$) derivative of $\hat f_i$ with respect to $y$ is uniformly bounded on $\mathbb{T}^4\times [0, \infty)$. Since $\hat f_i$ is uniformly bounded in $C^{k+1}$ given any fixed $k\in \mathbb{N}$, the \emph{Arzel\`a-Ascoli theorem} implies that for any sequence $t_\alpha\to \infty$, there exists a subsequence $t_{\alpha_m}$ such that $\hat f_{i}(\cdot, t_{\alpha_m})$ converges in $C^k$. Moreover, because the first derivative of $\hat f_i$ with respect to $y$ tends to $0$ as $t\to \infty$ (the second equation in \eqref{variousbounds}), the limits must all be constants, i.e. there exists three constants $c_1, c_2, c_3>0$ such that 
		\begin{equation} 
			\begin{split}
				\widehat{\omega}_i(t_{\alpha_m})\xrightarrow{C^k}_{m\to \infty} 
				\frac{v_\infty}{2\pi}c_i \dd y\wedge \dd x_i + \frac{1}{2}\epsilon_{ijk}\dd x_j\wedge\dd x_k\;,  \;\; i=1, 2, 3. 
			\end{split}
		\end{equation}
		
		On the other hand, since for each $i$, 
		\begin{equation}
			\begin{split} 
				\int_{\mathbb{T}^4}  \widehat\omega_i(t_{\alpha_m}) \wedge       \widehat\omega_i(t_{\alpha_m}) 
				& = \int_{\mathbb{T}^4} \omega_i(t_{\alpha_m}) \wedge \omega_i(t_{\alpha_m}) 
				= [\omega_i(0)]^2[\mathbb{T}^4]\\
				& = \int_{\mathbb{T}^4} \Big(\frac{v_\infty}{2\pi}c_i \dd y\wedge \dd x_i 
				+ \frac{1}{2}\epsilon_{ijk}\dd x_j\wedge\dd x_k \Big)^2\\
				& = \frac{v_\infty}{2\pi} c_i [\omega_i(0)]^2[\mathbb{T}^4]
			\end{split}
		\end{equation}
		We conclude that $\frac{v_\infty}{2\pi}c_i =1$ for $ i=1, 2,3 $. The limit symplectic $2$-form is thus $\omega_i^0$ and independent of the particular sequence $t_\alpha$. Because of the existence and uniqueness of all possible subsequential limits of the family $\{\widehat{\underline\omega}(t)\}_{t\in [0,\infty)}$, it actually holds that
		\begin{equation}
			\widehat{\underline{\omega}}(t)= (G_t^{-1})^*\underline{\omega}(t) 
			\xrightarrow{C^\infty}_{t\to \infty} \underline{\omega}^0
		\end{equation}
		where ``$C^\infty$'' means ``$C^k$'' for any $k\in \mathbb{N}$. The convergence statement in this theorem is established by setting $F_t=G_t^{-1}$.
	\end{proof}
	
	The family of closed $G_2$-structures $(\mathbb{T}^7, \phi(t))_{t\in [0, \infty)}$ where
	\[
	\phi(t)=\dd t^1\wedge\dd t^2\wedge\dd t^3 - \dd t^1\wedge\omega_1(t) - \dd t^2\wedge \omega_2(t) - \dd t^3\wedge\omega_3(t)
	\]
	is a family of cohomogeneity-one closed $G_2$-structures satisfying the $G_2$-Laplacian flow, and $F_t^*\phi(t)$
	converges smoothly to the standard torsion free $G_2$-structure, where each $F_t: \mathbb{T}^7=S^1\times \mathbb{T}^6\longrightarrow \mathbb{T}^7=S^1\times \mathbb{T}^6$ is a diffeomorphism only reparameterizing the $S^1$ factor.
	
\section*{Acknowledgement}
		Part of this work was done in the spring semester of 2016 when CY was a Viterbi Postdoctoral Fellow at MSRI and HH was visiting MSRI.  Both authors are very grateful to MSRI for providing such a wonderful semester-long program in differential geometry in a splendid and friendly environment. We would like to thank professor Joel Fine and Jason Lotay for many stimulating discussions.  We would also like to thank professor Simon Donaldson for his interest in this work. The suggestions from the referee are greatly appreciated.

\AtEndDocument{\bigskip{\footnotesize%
		\textsc{Hongnian Huang: Department of Mathematics and Statistics, University of New Mexico, Albuquerque, NM, 87131, U.S.A.}\par
		\textit{E-mail address} : \texttt{hnhuang@unm.edu}\\
		
		\textsc{Yuanqi Wang: Simons Center for Geometry and Physics, Stony Brook University, Stony Brook, NY, 11794, U.S.A.} \par  
		\textit{E-mail address} : \texttt{ywang@scgp.stonybrook.edu}\\
		
		\textsc{Chengjian Yao: D\'epartement de Math\'ematique, Universit\'e libre de Bruxelles(ULB), CP 218, Boulevard du Triomphe, B-1050 Bruxelles, Belgium} \par  
		\textit{E-mail address} : \texttt{Chengjian.Yao@ulb.ac.be}
}}
\end{document}